\newtheorem{thm}{Theorem}[section]
\newtheorem{cor}[thm]{Corollary}
\newtheorem{lem}[thm]{Lemma}
\newtheorem{claim}[thm]{Claim}
\newtheorem{prop}[thm]{Proposition}
\newtheorem{probl}[thm]{Problem}
\theoremstyle{definition} 
\newtheorem{rmk}[thm]{Remark}
\newtheorem{example}[thm]{Example}
\newtheorem{nota}[thm]{Notation}
\numberwithin{equation}{section}
\def \suppt {\operatorname{suppt}}
\newcommand{\Q}{\mathbb{Q}}
\newcommand{\R}{\mathbb{R}}
\newcommand{\e}{\varepsilon}
\DeclareMathOperator{\dens}{dens}
\renewcommand{\leq}{\leqslant}
\renewcommand{\geq}{\geqslant}
\newcommand{\Span}{\operatorname{span}}
\newcommand{\closedSpan}{\overline{\operatorname{span}}}
\newcommand{\cut}{\mathord{\upharpoonright}}
\newcommand{\w}{\mathrm{w}}
\newcommand{\interior}{\operatorname{int}}
\newcounter{smallromans}
\newenvironment{romanenumerate}
{\begin{list}{{\normalfont\textrm{(\roman{smallromans})}}}
  {\usecounter{smallromans}\setlength{\itemindent}{0cm}
   \setlength{\leftmargin}{5.5ex}\setlength{\labelwidth}{5.5ex}
   \setlength{\topsep}{.5ex}\setlength{\partopsep}{.5ex}
   \setlength{\itemsep}{0.1ex}}}
{\end{list}}
   \def\MR#1{}
\providecommand{\MR}[1]{}
\providecommand{\MR}{\relax\ifhmode\unskip\space\fi MR }
\providecommand{\href}[2]{#2}
\begin{document}
\title{Dense lineability and spaceability\\ in certain subsets of $\ell_{\infty}$}

\author[P.~Leonetti]{Paolo Leonetti}
\address[P.~Leonetti]{Department of Economics, Universit\`a degli Studi dell'Insubria, via Monte Generoso 71, Varese 21100, Italy}
\email{leonetti.paolo@gmail.com}

\author[T.~Russo]{Tommaso Russo}
\address[T.~Russo]{Universit\"{a}t Innsbruck, Department of Mathematics, Technikerstra\ss e 13, 6020 Innsbruck, Austria; and Department of Mathematics, Faculty of Electrical Engineering, Czech Technical University in Prague, Technick\'a 2, 166 27 Prague 6, Czech Republic}
\email{tommaso.russo@uibk.ac.at, tommaso.russo.math@gmail.com}

\author[J.~Somaglia]{Jacopo Somaglia}
\address[J.~Somaglia]{Politecnico di Milano, Dipartimento di Matematica, Piazza Leonardo da Vinci 32, 20133 Milano, Italy}
\email{jacopo.somaglia@polimi.it}

\thanks{P.~Leonetti is grateful to PRIN 2017 (grant 2017CY2NCA) for financial support. 
T.~Russo and J.~Somaglia were supported by Gruppo Nazionale per l'Analisi Matematica, la Probabilit\`a e le loro Applicazioni (GNAMPA) of Istituto Nazionale di Alta Matematica (INdAM), Italy.}

\keywords{Lineability; dense lineability; spaceability; subset of $\ell_\infty$; accumulation point}
\subjclass[2020]{Primary: 15A03, 46B87; Secondary: 46B20, 40A35.}

\begin{abstract} We investigate dense lineability and spaceability of subsets of $\ell_\infty$ with a prescribed number of accumulation points. We prove that the set of all bounded sequences with exactly countably many accumulation points is densely lineable in $\ell_\infty$, thus complementing a recent result of Papathanasiou who proved the same for the sequences with continuum many accumulation points. We also prove that these sets are spaceable. We then consider the same problems for the set of bounded non-convergent sequences with a finite number of accumulation points. We prove that such a set is densely lineable in $\ell_\infty$ and that it is nevertheless not spaceable. The said problems are also studied in the setting of ideal convergence and in the space $\mathbb{R}^\omega$.
\end{abstract}
\maketitle
\thispagestyle{empty}

\section{Introduction}\label{sec:intro}
A subset $M$ of a vector space $X$ is said to be \emph{lineable} (resp.~\emph{$\kappa$-lineable}, for a cardinal $\kappa$) if $M\cup\{0\}$ contains a vector space of infinite dimension (resp.~of dimension $\kappa$). Lineability problems have been investigated in several areas of Mathematical Analysis; we refer to, \emph{e.g.},  \cite{MR3445906, AvilTodo, MR3119823, FGK, FSTZ, FZ, PZpoly, Rmoutil} for a rather non-exhaustive list of results. Let us just quote here the seminal result of Gurariy \cite{Gurariy} that the set of continuous, nowhere differentiable functions is lineable in $C([0,1])$. There are several variants and strengthenings of the above definition. If $X$ is a Banach space (or, more generally, a topological vector space), a subset $M$ of $X$ is \emph{spaceable} if $M\cup\{0\}$ contains a closed infinite-dimensional subspace; $M$ is \emph{densely lineable} in $X$ if $M\cup\{0\}$ contains a linear subspace that is dense in $X$.

A particular case where these properties have been considered in the literature is when the subset $M$ has the form $X\setminus Y$, where $Y$ is a closed subspace of $X$; in which setting there are simple and complete results, see \cite{BernalCabreraJFA, KitTim, Wilansky1975}. In particular, $X\setminus Y$ is spaceable if and only if $X\setminus Y$ is lineable, if and only if $Y$ has infinite codimension (\emph{i.e.}, $X/Y$ is infinite-dimensional) \cite{Wilansky1975}. Moreover, for separable $X$, these conditions are equivalent to $X\setminus Y$ being densely lineable in $X$ \cite{BernalCabreraJFA}. For non-separable spaces, Papathanasiou \cite{Papa2021} very recently proved that $\ell_\infty \setminus c_0$ is densely lineable in $\ell_\infty$. It is however most unfortunate that his result is actually consequence of \cite{BernalCabreraJFA}; indeed, the very same proof of \cite[Theorem 2.5]{BernalCabreraJFA} gives the complete characterisation that $X\setminus Y$ is densely lineable in $X$ if and only if $\dim(X/Y)\geq \dens(X)$. For the sake of completeness, we record this result in Corollary \ref{cor: dense lin iff}. \smallskip

Yet, inspection of the proof in \cite{Papa2021} gives the following more precise result: there is a dense subspace $V$ of $\ell_\infty$ such that every non-zero vector in $V$ has exactly continuum many accumulation points. This result was the starting point of our research, as we were pondering lineability results for subsets of $\ell_\infty$ with a prescribed number of accumulation points (see \cite{MR3003676} for some results in a similar direction). Before we can explain our results, it will be convenient to introduce a piece of notation that we shall use extensively throughout the paper. For a vector $x\in\ell_\infty$, we indicate by $\mathrm{L}_x$ the set of its accumulation points. If $\kappa$ is a cardinal number, $\mathrm{L}(\kappa)$ stands for the set of all $x\in\ell_\infty$ that have exactly $\kappa$ accumulation points; in other words, 
\begin{equation*}
    \mathrm{L}(\kappa)= \{x\in \ell_\infty\colon |\mathrm{L}_x|=\kappa\}.    
\end{equation*}
In this notation, the result in \cite{Papa2021} asserts that $\mathrm{L}(\mathfrak{c})$ is densely lineable in $\ell_\infty$. As it turns out, this more precise version can also be easily derived from \cite[Theorem 2.5]{BernalCabreraJFA}, since we can write $\mathrm{L}(\mathfrak{c})=\ell_\infty \setminus Y$ where $Y$ is the linear subspace $\bigcup_{\kappa \leq \omega} \mathrm{L}(\kappa)$ (see Remark \ref{rmk: Lc dense lin}). Similarly, we also show that $\mathrm{L}(\omega)$ is densely lineable in $\ell_\infty$ (Theorem \ref{th: L omega dense lin}). Notice that $\mathrm{L}(\kappa)=\emptyset$ for uncountable $\kappa<\mathfrak{c}$, as $\mathrm{L}_x$ is a closed set; hence, these results settle the situation for sequences with infinitely many accumulation points. Next, in Theorem \ref{thm:finitepart} we prove that the set $\bigcup_{2\leq n<\omega}\mathrm{L}(n)$ (that is, the set of non-convergent sequences with finitely many accumulation points) is also densely lineable in $\ell_\infty$. We have to exclude $n=1$ in the above union, since $\bigcup_{1\leq n<\omega}\mathrm{L}(n)$ clearly is a dense subspace of $\ell_\infty$.

Having answered the problem for what concerns dense lineability, in Section \ref{sec: spaceability} we turn our attention to spaceability of the said sets. Here, the results cannot be derived from the characterisation mentioned in the second paragraph, since the result in \cite[\S 6]{Wilansky1975} only works when $Y$ is a closed subspace of $X$. This assumption is not available in our setting since the linear subspaces that we consider are $\bigcup_{\kappa \leq \omega} \mathrm{L}(\kappa)$ and $\bigcup_{\kappa< \omega} \mathrm{L}(\kappa)$ that are both dense in $\ell_\infty$. Yet, we give a simple direct proof that $\mathrm{L}(\mathfrak{c})$ and $\mathrm{L}(\omega)$ are spaceable (Theorem \ref{thm:Lomegaspaceable}). On the other hand, the main result of the section is of negative nature as it asserts that $\bigcup_{2\leq n<\omega}\mathrm{L}(n)$ is not spaceable (Theorem \ref{thm: finite not spaceble}).

The proof of the latter relies on a result which we consider to be of independent interest: if $A\subseteq \{2,3,\dots\}$ is a non-empty finite interval, then $\bigcup_{n \in A}\mathrm{L}(n)$ (that is, the set of bounded sequences with a number of accumulations points prescribed by $A$) is $|A|$-lineable and, in addition, the lineability constant $|A|$ is sharp (Theorem \ref{thm: interval not lin}). This opens the way, in Section \ref{sec: finer}, to the search of several finer lineability results, in which we show that the lineability of $\bigcup_{n \in A}\mathrm{L}(n)$ is a much harder task when $A$ is not an interval. To wit, we prove that if $A\subseteq \{2,3,\dots\}$ is a sufficiently `sparse' infinite set, then $\bigcup_{n \in A}\mathrm{L}(n)$ is not even $2$-lineable; for example, the sets $\bigcup_{2\leq n<\omega}\mathrm{L}(n!)$ and $\bigcup_{1\leq n<\omega}\mathrm{L}(3^n)$ are not $2$-lineable (Corollary \ref{cor:infinitesetnot2lineable}). On the other hand, it is also possible that an infinite set $A$ contains no non-trivial intervals and yet $\bigcup_{n \in A}\mathrm{L}(n)$ is $\mathfrak{c}$-lineable. Indeed, we prove in Theorem \ref{th: odd c lineable} that the set $\bigcup_{1\leq n<\omega}\mathrm{L}(2n+1)$ is $\mathfrak{c}$-lineable. Finally, in Section \ref{sec: problems} we discuss extensions of our results when we replace convergent sequences and accumulation points with ideal convergent sequences and $\mathcal{I}$-cluster points respectively; we also discuss the same problems in the space $\R^\omega$ with the pointwise topology, instead of $\ell_\infty$. Finally, we collect some open problems that arise from our research.

\section{Preliminaries}\label{sec: prelim}
Our notation regarding Topology, Functional Analysis, and Set Theory is quite standard, as in most textbooks; we refer, \emph{e.g.}, to \cite{Eng, FHHMZ,  Jech} for unexplained notation and terminology. The unique caveat is that by \emph{subspace} of a normed space we understand a linear subspace, not necessarily closed. This will cause no confusion, since we will almost only consider subspaces that are either closed, or dense; when closedness is assumed, it will be stressed explicitly. For $x=(x(n))_{n\in\omega}\in\ell_{\infty}$ we define $\suppt(x)\coloneqq\{n\in\omega\colon x(n)\neq 0\}$. Given a set $\Gamma$, $|\Gamma|$ denotes the cardinality of $\Gamma$ and $\mathcal{P}(\Gamma)$ denotes the collection of all its subsets. We regard cardinal numbers as initial ordinal numbers; in particular, we write $\omega$ for the smallest infinite cardinal. The cardinality of continuum is denoted by $\mathfrak{c}$. When $A$ and $B$ are subsets of $\Gamma$, we write $A\subseteq ^* B$ to mean that $A\setminus B$ is finite; similarly, $A=^* B$ means that the symmetric difference between $A$ and $B$ is finite. $x\cut_A$ denotes the restriction of the function $x$ to the subset $A$ of its domain. For a subset $A\subseteq \Gamma$ we denote by $\bm{1}_{A}$ the characteristic function of $A$. A family $\mathscr{I}\subseteq \mathcal{P}(\omega)$ is \emph{independent} if for any distinct sets  $X_0,\dots, X_n, Y_0,\dots, Y_m \in \mathscr{I}$
\begin{equation*}
    X_0 \cap \dots \cap X_n \setminus (Y_0 \cup\dots\cup Y_m) \text{ is infinite.}
\end{equation*}
It is well known that $\omega$ contains an independent family of cardinality $\mathfrak{c}$ (see \cite[Lemma 7.7]{Jech}).

\smallskip

Recall that for a sequence $x\in \ell_\infty$ and $\eta\in \R$, $\eta$ is an \emph{accumulation point} of $x$ if $\{n \in \omega\colon |x(n)-\eta|<\e\}$ is infinite for all $\e>0$. Let us record explicitly the following notation that we mentioned already in the Introduction.
\begin{nota} For a vector $x\in\ell_\infty$ and a cardinal number $\kappa$ we write
\begin{gather*}
    \mathrm{L}_x\coloneqq \{\eta\in\R\colon \eta \text{ is an accumulation point of }x\} \\
    \mathrm{L}(\kappa)\coloneqq \{x\in \ell_\infty\colon |\mathrm{L}_x|=\kappa\}.
\end{gather*}
\end{nota}

Given $x,y\in\ell_\infty$ and $\alpha,\beta\in\R$ it is clear that $\mathrm{L}_{\alpha x + \beta y}\subseteq \{\alpha\xi+ \beta\eta \colon \xi\in\mathrm{L}_x, \eta\in\mathrm{L}_y\}$. For sequences with finitely many accumulation points we have the following simple consequence that we shall use several times.

\begin{lem}\label{lem:boundonnumberaccpoints} Let $x\in \mathrm{L}(k)$, $y \in \mathrm{L}(n)$ and $z\in\Span \{x,y\}$. Then $|\mathrm{L}_z|\leq kn$. Moreover, if $z=\alpha x+\beta y$ where both $\alpha$ and $\beta$ are different from $0$, then 
\begin{equation*}
    \max\left\{\frac{n}{k}, \frac{k}{n}\right\} \leq |\mathrm{L}_z| \leq kn.    
\end{equation*}
\end{lem}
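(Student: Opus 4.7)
The plan is to deduce both inequalities directly from the set-theoretic inclusion
\[
\mathrm{L}_{\alpha x+\beta y}\subseteq\{\alpha\xi+\beta\eta\colon \xi\in\mathrm{L}_x,\eta\in\mathrm{L}_y\}
\]
observed by the authors immediately before the lemma, together with a symmetric reuse of the resulting bound. Throughout I fix $z=\alpha x+\beta y$ and set $m\coloneqq |\mathrm{L}_z|$, $k\coloneqq |\mathrm{L}_x|$, $n\coloneqq |\mathrm{L}_y|$.

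First I would establish the upper bound $m\leq kn$. If either scalar vanishes, then $z$ is zero or a non-zero scalar multiple of $x$ or $y$, so $|\mathrm{L}_z|$ is either $1$, $k$, or $n$, each of which is at most $kn$ (recall that $k,n\geq 1$, since any bounded sequence admits at least one accumulation point). If both $\alpha$ and $\beta$ are non-zero, the displayed inclusion immediately yields $m\leq k\cdot n$.

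For the lower bound, assume $\alpha\neq 0$ and $\beta\neq 0$. The key observation is that the relation $z=\alpha x+\beta y$ can be symmetrically inverted: one has $y=\beta^{-1}z-\alpha\beta^{-1}x$ and $x=\alpha^{-1}z-\beta\alpha^{-1}y$, and in each of these rewritings both coefficients on the right-hand side are non-zero. Applying the (just proved) upper bound to the first equation, viewed as expressing $y$ as a linear combination of $z\in\mathrm{L}(m)$ and $x\in\mathrm{L}(k)$ with non-zero coefficients, gives $n\leq m\cdot k$, whence $m\geq n/k$. The analogous argument with the second equation yields $m\geq k/n$. Combining these two inequalities produces the desired $m\geq\max\{n/k,k/n\}$.

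There is no real obstacle here: the whole proof is essentially the trivial cardinality bound on the image of a two-variable affine map, reused a second time after inverting the linear relation. The only point that requires any attention is the hypothesis $\alpha,\beta\neq 0$ in the second part, which is used precisely to guarantee that when we rearrange to solve for $x$ or $y$, the resulting coefficients are again non-zero, so that the upper bound is genuinely applicable.
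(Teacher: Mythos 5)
Your proof is correct and follows essentially the same route as the paper: the upper bound comes from the inclusion $\mathrm{L}_{\alpha x+\beta y}\subseteq\{\alpha\xi+\beta\eta\colon\xi\in\mathrm{L}_x,\eta\in\mathrm{L}_y\}$, and the lower bound from inverting the relation to write $y\in\Span\{x,z\}$ (resp.\ $x\in\Span\{y,z\}$) and reapplying the first part. The only cosmetic difference is that the paper assumes $k\leq n$ without loss of generality and performs a single inversion, whereas you carry out both symmetric inversions explicitly.
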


\begin{proof} If $z=\alpha x+\beta y$, then $\mathrm{L}_z\subseteq \{\alpha\xi+ \beta\eta \colon \xi\in\mathrm{L}_x, \eta\in\mathrm{L}_y\}$ gives $|\mathrm{L}_z|\leq kn$. For the `Moreover' part, we can assume that $k\leq n$. As $\beta\neq 0$, $y\in\Span\{x,z\}$; hence the first part gives $n=|\mathrm{L}_y|\leq |\mathrm{L}_z|\cdot k$, and we are done.
\end{proof}

We conclude the section by giving a convenient representation for a sequence with finitely many accumulation points, that we shall use several times in what follows. We denote by $\sim_{c_0}$ the equivalence relation on $\ell_{\infty}$ defined by $$x \sim_{c_0} y \,\,\,\, \text{if and only if}\,\,\,\, x-y\in c_0.$$ 

\begin{lem}\label{lem: mod c0} 
Fix $n \in \omega$ and a sequence $x \in \mathrm{L}(n)$. Then there are a partition $\{S_1,\dots,S_n\}$ of $\omega$ in infinite sets and mutually distinct scalars $\xi_1,\dots,\xi_n$ such that
\begin{equation}\label{eq: mod c0}
    x \sim_{c_0} \xi_1 \bm{1}_{S_1} +\dots + \xi_n \bm{1}_{S_n}.
\end{equation}
Moreover, such a representation is unique up to the order and finite sets. More precisely, if $\eta_1\bm{1}_{T_1} +\dots+ \eta_m\bm{1}_{T_m}$ is another representation, then $n=m$ and there is a bijection $\sigma$ of $\{1,\dots,n\}$ such that $\eta_j= \xi_{\sigma(j)}$ and $T_j=^* S_{\sigma(j)}$, for every $j\in\{1,\dots,n\}$.
\end{lem}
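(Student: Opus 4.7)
The plan is a two-step argument: first produce a representation via a nearest-point construction, then exploit the rigidity of accumulation points to conclude uniqueness.

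For existence, let $\xi_1,\ldots,\xi_n$ enumerate $\mathrm{L}_x$. If $n=1$, simply take $S_1=\omega$ and note that $x-\xi_1\bm{1}_\omega\in c_0$ directly from the definition of accumulation point applied to the unique $\xi_1$. If $n\geq 2$, set $\varepsilon_0\coloneqq\frac{1}{3}\min_{i\neq j}|\xi_i-\xi_j|>0$ and define $S_i$ to be the set of $k\in\omega$ for which $\xi_i$ is the closest among $\xi_1,\ldots,\xi_n$ to $x(k)$ (breaking ties by smallest index). Each $S_i$ is infinite: since $\xi_i$ is an accumulation point, infinitely many $k$ satisfy $|x(k)-\xi_i|<\varepsilon_0$, and the choice of $\varepsilon_0$ forces $\xi_i$ to be the unique nearest $\xi_j$ to such an $x(k)$. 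To verify $x-\sum_{i}\xi_i\bm{1}_{S_i}\in c_0$, I would show that for every $\delta>0$ the set $G_\delta\coloneqq\{k\in\omega:|x(k)-\xi_{i(k)}|\geq\delta\}$ (where $i(k)$ is the unique index with $k\in S_{i(k)}$) is finite. Indeed, each $k\in G_\delta$ satisfies $|x(k)-\xi_j|\geq\delta$ for every $j$, so if $G_\delta$ were infinite, then $x\cut_{G_\delta}$, being bounded, would admit a convergent subsequence whose limit is an accumulation point of $x$ at distance at least $\delta$ from each $\xi_j$, contradicting $\mathrm{L}_x=\{\xi_1,\ldots,\xi_n\}$.

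For uniqueness, any sequence $\sum_{i}\xi_i\bm{1}_{S_i}$ with $\{S_i\}$ an infinite partition of $\omega$ and the $\xi_i$ mutually distinct has exactly $\{\xi_1,\ldots,\xi_n\}$ as its set of accumulation points; moreover, $c_0$-perturbations do not affect accumulation points. Hence a second representation $\sum_{j}\eta_j\bm{1}_{T_j}\sim_{c_0} x$ must share the same scalar set, forcing $m=n$ and providing a bijection $\sigma$ of $\{1,\dots,n\}$ with $\eta_j=\xi_{\sigma(j)}$. To identify the index sets, I would inspect the difference $\sum_{i}\xi_i\bm{1}_{S_i}-\sum_{j}\eta_j\bm{1}_{T_j}\in c_0$ on $S_{\sigma(j)}\cap T_k$ for $k\neq j$: the difference is the nonzero constant $\xi_{\sigma(j)}-\xi_{\sigma(k)}$, so the intersection must be finite. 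Summing these finite intersections over $k\neq j$ gives $S_{\sigma(j)}=^{*} T_j$, as desired. The only genuinely delicate step is the $c_0$-verification via $G_\delta$; once that is in place, the rest is bookkeeping, and the nearest-point partition is tailor-made so that any failure produces a new accumulation point, which is the contradiction that drives the whole argument.
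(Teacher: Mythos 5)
Your proposal is correct and takes essentially the same approach as the paper: existence via a partition of $\omega$ on each piece of which $x$ tends to the corresponding accumulation point (your nearest-point construction with the $G_\delta$ argument simply makes explicit what the paper asserts directly), and uniqueness by noting that the difference of two representations lies in $c_0$ while being a nonzero constant on any mismatched overlap, which is the paper's ``finitely many values, hence eventually zero'' argument in slightly different words.
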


\begin{rmk} Note that if $x$ admits a representation as in (\ref{eq: mod c0}), then $\mathrm{L}_x=\{\xi_1,\dots,\xi_n\}$ and $\|x\|\geq \max\{|\xi_i|\colon i\in\{1,\dots, n\}\}$. The shortest way to prove the second formula is to realise that $\max\{|\xi_i|\colon i\in\{1,\dots, n\}\}= \|q(x)\|_{\ell_\infty /c_0}\leq \|x\|$, where $q\colon \ell_\infty \to \ell_\infty /c_0$ is the quotient map.
\end{rmk}

\begin{proof}
Let $\{\xi_1,\dots,\xi_n\}$ be the accumulation points of $x$ and $\{S_1,\dots,S_n\}$ be a partition of $\omega$ in infinite sets such that $\lim_{k\in S_i}x(k)=\xi_i$ for every $i\in\{1,\dots,n\}$. Hence, we get $x\sim_{c_0} \xi_1 \bm{1}_{S_1} +\dots + \xi_n \bm{1}_{S_n}$. Conversely, if $x$ has the representation \eqref{eq: mod c0}, $\mathrm{L}_x= \{\xi_1,\dots,\xi_n\}$; therefore the scalars $\xi_1,\dots,\xi_n$ are uniquely determined up to the order. Suppose that there exists a second partition $\{T_1,\dots,T_n\}$ such that $x\sim_{c_0}\xi_1 \bm{1}_{T_1} +\dots + \xi_n \bm{1}_{T_n}$. Then, $\xi_1 \left(\bm{1}_{S_1}- \bm{1}_{T_1}\right) +\dots+ \xi_n \left(\bm{1}_{S_n}- \bm{1}_{T_n}\right) \in c_0$ and it attains finitely many values; hence such a sequence is eventually equal to zero, whence $S_i=^* T_i$ for every $i\in\{1,\dots,n\}$.
\end{proof}

\section{Dense lineability }\label{sec: finitely lims}
In this section we prove that $\mathrm{L}(\omega)$ and $\bigcup_{2\leq n<\omega}\mathrm{L}(n)$ are densely lineable in $\ell_\infty$, thus complementing the result from \cite{Papa2021} that $\mathrm{L}(\mathfrak{c})$ is densely lineable in $\ell_\infty$. As it turns out, both results are consequence of the extension of \cite[Theorem 2.5]{BernalCabreraJFA} that we mentioned already in the Introduction. Therefore, to begin with, we recall \cite[Theorem 2.5]{BernalCabreraJFA} in its general version. Even though the proof is essentially the same as in \cite{BernalCabreraJFA}, we provide a full argument for convenience of the reader. For a topological vector space $X$, $\dens(X)$ denotes the density character of $X$ and $\dim(X)$ its linear dimension (namely, the cardinality of an algebraic basis). If $Y$ is a linear subspace of $X$, the \emph{codimension} of $Y$ in $X$ is $\dim (X/Y)$. The weight of a topological space $X$ is denoted by $\w(X)$. 

\begin{lem}\label{lem: obv dense lin}
Let $X$ be a topological vector space and $Y$ be a linear subspace such that $\w(X)\leq \dim(X/Y)$. Then $X\setminus Y$ is densely lineable in $X$.
\end{lem}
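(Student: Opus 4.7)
The plan is to build a subspace $V \coloneqq \Span\{v_\alpha : \alpha < \kappa\}$, where $\kappa \coloneqq \w(X)$, that is dense in $X$ and satisfies $V \cap Y = \{0\}$; such a $V$ immediately witnesses that $X \setminus Y$ is densely lineable in $X$. To this end, I would fix a base $\{U_\alpha : \alpha < \kappa\}$ of non-empty open sets of $X$ and select the vectors $v_\alpha$ by transfinite recursion, at stage $\alpha$ arranging that $v_\alpha \in U_\alpha$ and $v_\alpha \notin W_\alpha + Y$, where $W_\alpha \coloneqq \Span\{v_\beta : \beta < \alpha\}$.

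The crucial point will be that each recursive choice is actually available. The image of $W_\alpha + Y$ in $X/Y$ has linear dimension at most $|\alpha| < \kappa \leq \dim(X/Y)$, hence is a proper subspace of $X/Y$, so $W_\alpha + Y$ is a proper linear subspace of $X$. In any topological vector space, a non-empty open set $U$ cannot sit inside a proper linear subspace $Z$: picking $u_0 \in U \subseteq Z$, the translate $U - u_0 \subseteq Z$ would be an absorbing neighbourhood of $0$, forcing $Z = X$. Thus $U_\alpha \not\subseteq W_\alpha + Y$, and a suitable $v_\alpha$ can be chosen.

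Once the construction is complete, density of $V$ will be immediate since $v_\alpha \in U_\alpha \cap V$ for every $\alpha$, so $V$ meets every member of the fixed base. That $V \cap Y = \{0\}$ will follow from the recursive constraint: a non-trivial combination $\sum_{i=1}^n \lambda_i v_{\alpha_i} \in Y$ with $\alpha_1 < \dots < \alpha_n$ and $\lambda_n \neq 0$, once normalised by dividing through by $\lambda_n$, would place $v_{\alpha_n}$ inside $W_{\alpha_n} + Y$, contrary to its choice. The only delicate point is the verification that $W_\alpha + Y$ is a proper subspace of $X$, and this is precisely where the hypothesis $\w(X) \leq \dim(X/Y)$ is used; everything else is bookkeeping.
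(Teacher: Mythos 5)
Your proposal is correct and follows essentially the same route as the paper: a transfinite recursion over a base of size $\w(X)$, choosing at each stage a vector in the given basic open set avoiding the span of $Y$ together with the previously chosen vectors, which is proper because its image in $X/Y$ has dimension less than $\dim(X/Y)$. Your explicit justification that a non-empty open set cannot lie in a proper subspace (via the absorbing-set argument) is the same fact the paper uses implicitly when asserting the proper subspaces have empty interior.
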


\begin{proof} Let $\kappa\coloneqq \w(X)$ and $\{B_{\alpha}\}_{\alpha\in \kappa}$ be a topological basis for $X$. Assume that every $B_\alpha$ is non-empty. We build by transfinite induction vectors $\{x_{\alpha}\}_{\alpha\in \kappa}$ such that 
\begin{equation*}
    x_\alpha \in B_\alpha\setminus \Span(Y\cup \{x_\gamma\}_{\gamma\in\alpha}) \text{ for all } \alpha<\kappa.
\end{equation*}
Since $\interior(Y)=\emptyset$, there is $x_0\in B_0\setminus Y$. Let $\alpha<\kappa$ and suppose, by transfinite induction, that $x_\beta \in B_\beta\setminus \Span(Y\cup \{x_\gamma\}_{\gamma \in\beta})$ has been defined for every $\beta<\alpha$. Let $Y_{\alpha}\coloneqq \Span(Y\cup \{x_\beta\}_{\beta\in\alpha})$. The assumption that $Y$ has codimension at least $\kappa$ in $X$ gives $Y_\alpha\subsetneq X$, so $\interior(Y_{\alpha}) =\emptyset$. Hence, there is $x_\alpha \in B_{\alpha}\setminus Y_{\alpha}$. This shows the existence of the vectors $\{x_{\alpha}\} _{\alpha\in \kappa}$. The subset $\{x_{\alpha}\} _{\alpha\in \kappa}$ is dense in $X$, therefore $V\coloneqq \Span \{x_{\alpha}\}_{\alpha\in \kappa}$ is dense in $X$ and it is readily seen that $V\cap Y=\{0\}$.
\end{proof}

\begin{cor}\label{cor: dense lin iff} Let $X$ be a metrisable infinite-dimensional topological vector space with $\kappa=\dens(X)$ and $Y$ be a linear subspace. Then the following are equivalent:
\begin{romanenumerate}
    \item\label{cor item: dense lin} $X\setminus Y$ is densely lineable in $X$,
    \item\label{cor item: k lin} $X\setminus Y$ is $\kappa$-lineable,
    \item\label{cor item: dim X/Y} $\kappa\leq \dim(X/Y)$.
\end{romanenumerate}
\end{cor}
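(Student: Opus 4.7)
The plan is to establish the implications \ref{cor item: dim X/Y}$\Rightarrow$\ref{cor item: dense lin}$\Rightarrow$\ref{cor item: k lin}$\Rightarrow$\ref{cor item: dim X/Y}. The implication \ref{cor item: dim X/Y}$\Rightarrow$\ref{cor item: dense lin} is essentially free: since $X$ is metrisable we have $\w(X)=\dens(X)=\kappa$, so the hypothesis of Lemma \ref{lem: obv dense lin} is satisfied and dense lineability follows.

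For \ref{cor item: k lin}$\Rightarrow$\ref{cor item: dim X/Y}, let $W\subseteq X$ be a subspace with $\dim W=\kappa$ and $W\cap Y=\{0\}$. The restriction to $W$ of the quotient map $q\colon X\to X/Y$ is then injective, so $\dim(X/Y)\geq \dim q(W)=\dim W=\kappa$.

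The main point, where the hypothesis of metrisability is actually used, is \ref{cor item: dense lin}$\Rightarrow$\ref{cor item: k lin}. Suppose $V\subseteq (X\setminus Y)\cup\{0\}$ is a dense linear subspace of $X$; note that $V\cap Y=\{0\}$, so it will suffice to show $\dim V\geq \kappa$. Set $\lambda\coloneqq \dim V$; since $X$ is infinite-dimensional, $\lambda$ is infinite (otherwise $V$ would be a proper closed subspace, contradicting density). Fix an algebraic basis $\{v_\alpha\}_{\alpha<\lambda}$ of $V$ and consider the set $D$ of all rational (or rational plus rational imaginary) linear combinations of the $v_\alpha$. Then $D$ is dense in $V$ (because scalar multiplication is continuous and $\mathbb{Q}$ is dense in the scalar field), hence dense in $X$, and $|D|=\lambda$. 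Therefore $\kappa=\dens(X)\leq |D|=\lambda=\dim V$, as desired. The only subtlety here is the passage to rational combinations, but this is standard and is the step that genuinely uses metrisability (so that density is witnessed by a subset of the claimed cardinality). Once this cardinal estimate is in hand, taking $W\coloneqq V$ gives the required $\kappa$-dimensional subspace and completes the proof.
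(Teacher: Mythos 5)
Your proposal is correct and follows essentially the same route as the paper: the cycle (iii)$\Rightarrow$(i)$\Rightarrow$(ii)$\Rightarrow$(iii), with (iii)$\Rightarrow$(i) obtained from Lemma \ref{lem: obv dense lin} via $\w(X)=\dens(X)$ and (ii)$\Rightarrow$(iii) via injectivity of the quotient map on the witnessing subspace; the only difference is that you spell out (i)$\Rightarrow$(ii) (a dense subspace must have dimension at least $\dens(X)$, by passing to rational combinations of a basis), a step the paper simply calls obvious. One minor remark: metrisability is not really what makes the rational-combinations step work (that argument is valid in any topological vector space, and density of a subset bounds $\dens(X)$ by definition); in this corollary metrisability is what gives $\w(X)=\dens(X)$ for the implication (iii)$\Rightarrow$(i), plus Hausdorffness for your aside that a finite-dimensional subspace cannot be dense.
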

\begin{proof} Every metric space $X$ satisfies $\dens(X) = \w(X)$; hence, \eqref{cor item: dim X/Y} $\Rightarrow$ \eqref{cor item: dense lin} follows from Lemma \ref{lem: obv dense lin}. \eqref{cor item: dense lin} $\Rightarrow$ \eqref{cor item: k lin} is obvious. For \eqref{cor item: k lin} $\Rightarrow$ \eqref{cor item: dim X/Y}, take a subspace $V$ of $X$ with $\dim(V)=\kappa$ and such that $V\cap Y=\{0\}$; let $q\colon X\to X/Y$ be the canonical quotient map. Since $q\cut_V$ is injective, we have $\kappa= \dim(V)=\dim(q[V])\leq \dim(X/Y)$.
\end{proof}

In order to build a vector space of dimension $\mathfrak{c}$ inside $\mathrm{L}(\omega)$ we shall exploit the `strong' linear independence of geometric sequences in order to prevent non-trivial linear combinations to have only finitely many accumulation points. Similar uses of geometric sequences can be found in several places in the literature, \emph{e.g.}, \cite{CarSep14, HKRtams, HRjfa, Klee}. For this purpose, we will use the following standard lemma, see, \emph{e.g.}, \cite[Proposition 2.1]{CarSep14}; its proof is so simple that we give it here.

\begin{lem}\label{lem: geometric infinite values} 
Let $\lambda_0,\dots,\lambda_n\in(0,1)$ be mutually distinct scalars and let $\beta_0\dots,\beta_n \in\R$ not all equal to $0$. Then the sequence
\begin{equation*}
    \left( \beta_0\lambda_0^j + \dots + \beta_n\lambda_n^j \right)_{j\in\omega}
\end{equation*}
attains each of its values finitely many times. In particular, its range is an infinite set.
\end{lem}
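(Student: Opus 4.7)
The plan is to prove the statement by contradiction, reducing the ``$c \neq 0$'' case to the ``$c = 0$'' case via a standard translation trick that exploits the assumption $\lambda_i \in (0,1)$, so that $1$ is never one of the $\lambda_i$'s.

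First I would handle the case $c = 0$: suppose, toward a contradiction, that the set $J \coloneqq \{j \in \omega \colon \sum_{i=0}^n \beta_i \lambda_i^j = 0\}$ is infinite, with not all $\beta_i$ equal to zero. After discarding the indices $i$ for which $\beta_i = 0$ and relabeling, I may assume every $\beta_i$ is non-zero, and I order the $\lambda_i$'s so that $\lambda_0 > \lambda_1 > \dots > \lambda_n$. Dividing the identity $\sum_{i=0}^n \beta_i \lambda_i^j = 0$ by $\lambda_0^j$ gives
\begin{equation*}
    \beta_0 + \sum_{i=1}^n \beta_i (\lambda_i/\lambda_0)^j = 0 \qquad (j \in J).
\end{equation*}
Since $\lambda_i/\lambda_0 \in (0,1)$ for every $i \geq 1$, letting $j \to \infty$ along $J$ yields $\beta_0 = 0$, a contradiction.

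For arbitrary $c \in \R$, I would apply this to the enlarged family obtained by adjoining $\lambda_{n+1} \coloneqq 1$ with coefficient $\beta_{n+1} \coloneqq -c$. The condition $\sum_{i=0}^n \beta_i \lambda_i^j = c$ translates to $\sum_{i=0}^{n+1} \beta_i \lambda_i^j = 0$, while the scalars $\lambda_0, \dots, \lambda_n, 1$ remain mutually distinct because $\lambda_i \in (0,1)$ for $i \leq n$. If $c \neq 0$, the coefficients are not all zero and the previous case applies; if $c = 0$ and all $\beta_i$ were zero the statement is vacuous, so we are already in the previous case. In either subcase, the value $c$ is attained only finitely many times.

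The ``In particular'' clause then follows immediately: if the range were finite, the pigeonhole principle would force some value to be attained infinitely often, contradicting what we just proved. I do not anticipate any serious obstacle here; the only point requiring a modicum of care is the clean separation of the $c=0$ and $c\neq 0$ cases and the verification that the auxiliary scalar $1$ is admissible, which is guaranteed precisely by the hypothesis $\lambda_i \in (0,1)$.
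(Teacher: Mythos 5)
Your proof is correct and is essentially the paper's argument: the core step in both is to divide by the dominant geometric term and pass to the limit along the infinitely many indices where the value is attained, thereby killing its coefficient. Your homogenisation trick of adjoining $\lambda_{n+1}=1$ with coefficient $-c$ is just a repackaging of the paper's first step (there, letting $j\to\infty$ directly shows the repeated value must be $0$, using $\lambda_i<1$), so the two proofs differ only in organisation, not in substance.
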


\begin{proof} We can assume that $0<\lambda_0<\dots <\lambda_n<1$ and that $\beta_i\neq0$ for every $i \in \{0,\dots, n\}$. Moreover, the conclusion is clearly true when $n=0$, so we assume $n\geq 1$. Towards a contradiction, assume that there are a subsequence $(j_k)_{k\in\omega}$ of $\omega$ and $\gamma\in\R$ such that
\begin{equation*}
    \beta_0\lambda_0^{j_k} + \dots + \beta_n\lambda_n^{j_k} =\gamma \qquad \text{for every } k\in\omega.
\end{equation*}
Letting $k\to \infty$ shows that $\gamma=0$. Hence, we can divide by $\lambda_n^{j_k}$ to get
\begin{equation*}
    \beta_0\left(\frac{\lambda_0}{\lambda_n} \right)^{j_k} + \dots + \beta_{n-1} \left(\frac{\lambda_{n-1}}{\lambda_n}\right)^{j_k} =-\beta_n.
\end{equation*}
Since $\lambda_i<\lambda_n$ for $i\in \{0,\dots,n-1\}$, letting $k\to\infty$ gives $\beta_n=0$, a contradiction.
\end{proof}

\begin{thm}\label{th: L omega dense lin} $\mathrm{L}(\omega)$ is densely lineable in $\ell_\infty$.
\end{thm}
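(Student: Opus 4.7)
The plan is to apply Lemma \ref{lem: obv dense lin} to a carefully chosen pair of subspaces. The key observation is that $\mathrm{L}(\omega)\cup\{0\}$ is not itself a linear subspace: linear combinations of vectors with countably many accumulation points may collapse to finitely many accumulation points, or inflate to continuum-many, so a direct appeal to Corollary \ref{cor: dense lin iff} with $Y=\ell_\infty\setminus \mathrm{L}(\omega)$ is unavailable. Instead, I would consider
\[
Z\coloneqq\bigcup_{\kappa\leq\omega}\mathrm{L}(\kappa)\cup\{0\} \quad\text{and}\quad F\coloneqq\bigcup_{n<\omega}\mathrm{L}(n)\cup\{0\},
\]
both of which are linear subspaces of $\ell_\infty$ thanks to the inclusion $\mathrm{L}_{\alpha x+\beta y}\subseteq\{\alpha\xi+\beta\eta\colon\xi\in\mathrm{L}_x,\eta\in\mathrm{L}_y\}$ noted in the preliminaries. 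Since $Z\setminus F=\mathrm{L}(\omega)$, it suffices to produce a subspace $V$ of $Z$ that is dense in $Z$ and satisfies $V\cap F=\{0\}$.

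First I would check that $F$ (hence $Z$) is dense in $\ell_\infty$: given $x\in\ell_\infty$ and $\varepsilon>0$, the rounding $y(n)\coloneqq\varepsilon\lfloor x(n)/\varepsilon\rfloor$ is a finite-valued element of $F$ with $\|x-y\|<\varepsilon$. In particular, $Z$ is a metrisable topological vector space with $\w(Z)=\dens(Z)=\mathfrak{c}$, and a subspace of $Z$ that is dense in $Z$ is automatically dense in $\ell_\infty$. The heart of the argument is to show $\dim(Z/F)\geq\mathfrak{c}$, which is where the geometric sequences intervene. Fix a partition $\omega=\bigsqcup_{k\in\omega}N_k$ into infinite sets and, for each $\lambda\in(0,1)$, define $z_\lambda\in\ell_\infty$ by $z_\lambda(n)\coloneqq\lambda^k$ for $n\in N_k$. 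Then $\mathrm{L}_{z_\lambda}=\{\lambda^k\colon k\in\omega\}\cup\{0\}$ is countably infinite, so $z_\lambda\in\mathrm{L}(\omega)\subseteq Z$. Moreover, for distinct $\lambda_1,\dots,\lambda_m\in(0,1)$ and nonzero scalars $\beta_1,\dots,\beta_m$, the combination $\sum_i\beta_iz_{\lambda_i}$ is constantly $v_k\coloneqq\sum_i\beta_i\lambda_i^k$ on each $N_k$; by Lemma \ref{lem: geometric infinite values}, the set $\{v_k\colon k\in\omega\}$ is infinite (and $v_k\to 0$), so $\sum_i\beta_iz_{\lambda_i}\in\mathrm{L}(\omega)$ and in particular not in $F$. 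Thus $\{z_\lambda\colon\lambda\in(0,1)\}$ is a family of $\mathfrak{c}$ vectors in $Z$ linearly independent modulo $F$, giving $\dim(Z/F)\geq\mathfrak{c}$.

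Finally, Lemma \ref{lem: obv dense lin} applied to the topological vector space $Z$ and its subspace $F$ (with $\w(Z)=\mathfrak{c}\leq\dim(Z/F)$) yields a subspace $V\subseteq Z$, dense in $Z$, with $V\cap F=\{0\}$; equivalently, $V\setminus\{0\}\subseteq\mathrm{L}(\omega)$ and $V$ is dense in $\ell_\infty$. The essential difficulty, which this approach sidesteps, is precisely the non-subspace character of $\mathrm{L}(\omega)\cup\{0\}$: restricting the ambient space from $\ell_\infty$ to $Z$ eliminates the continuum-many-accumulation-point inflation, while the geometric sequences rule out the finite-accumulation-point collapse via Lemma \ref{lem: geometric infinite values}.
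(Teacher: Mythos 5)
Your proof is correct and follows essentially the same route as the paper: the same decomposition into the subspaces $\bigcup_{\kappa\leq\omega}\mathrm{L}(\kappa)$ and $\bigcup_{n<\omega}\mathrm{L}(n)$, the same family of geometric sequences $\sum_k \lambda^k\bm{1}_{N_k}$ handled via Lemma \ref{lem: geometric infinite values}, and the same appeal to Lemma \ref{lem: obv dense lin} (the paper routes this through Corollary \ref{cor: dense lin iff}, which is only a cosmetic difference).
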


\begin{proof} Let $X\coloneqq \bigcup_{\kappa\leq \omega} \mathrm{L}(\kappa)$ and $Y\coloneqq \bigcup_{\kappa< \omega} \mathrm{L}(\kappa)$. Then $X$ and $Y$ are linear subspaces of $\ell_\infty$, $X$ is dense in $\ell_\infty$, and $X\setminus Y =\mathrm{L}(\omega)$. Therefore, if we prove that $\mathrm{L}(\omega)$ is $\mathfrak{c}$-lineable, Corollary \ref{cor: dense lin iff} would yield us that $\mathrm{L}(\omega)$ is densely lineable in $X$, hence also in $\ell_\infty$, which would conclude the proof.

To this aim, take disjoint subsets $(B_j)_{j\in\omega}$ of $\omega$ such that each $B_j$ is an infinite set. We can now define, for every $q\in(0,1)$, the following vector in $\ell_\infty$
\begin{equation}\label{eq:definitionFAkLomega}
    f_q\coloneqq \sum_{j=0}^\infty q^j \bm{1}_{B_j};
\end{equation}
it is sufficient to prove, as we now do, that no linear combination of $\{f_q\colon q\in (0,1)\}$ with non-zero scalars belongs to $Y$. For this aim, take mutually distinct $q_0 ,\dots, q_N\in (0,1)$ and non-zero scalars $d_0,\dots,d_N\in \R$. Then we can write
\begin{equation}\label{eq: x=y+z}
    x\coloneqq \sum_{n=0}^N d_n f_{q_n}= \sum_{j=0}^\infty \left( \sum_{n=0}^N d_n (q_n)^j \right) \bm{1}_{B_j}.
\end{equation}
Lemma \ref{lem: geometric infinite values} yields us that the sequence $(h_j)_{j \in \omega}$, defined by
\begin{equation}\label{eq: sequence with omega lims}
    h_j\coloneqq  \sum_{n=0}^N d_n (q_n)^j
\end{equation}
attains infinitely many distinct values. Since each value is attained on the corresponding infinite set $B_j$, it follows that the sequence $x$ admits infinitely many accumulation points. On the other hand, $h_j\to 0$; thus $\mathrm{L}_x$ is the countable set
\begin{equation*}
    \mathrm{L}_x= \left\{0, h_j \right\}_{j\in\omega}.
\end{equation*}
Hence, $x\in\mathrm{L}(\omega)$ and we are done.
\end{proof}

\begin{rmk}\label{rmk: Lc dense lin} A small variation of the above proof gives an alternative argument that $\mathrm{L}(\mathfrak{c})$ is densely lineable in $\ell_\infty$. Indeed, we now consider $X\coloneqq \ell_\infty$ and $Y\coloneqq \bigcup_{\kappa\leq \omega} \mathrm{L}(\kappa)$ and we only have to show that $X\setminus Y= \mathrm{L}(\mathfrak{c})$ is $\mathfrak{c}$-lineable. Next, for every $j\in\omega$ let $r_j\colon \omega\to (0,1)$ be a sequence such that $\suppt(r_j)=B_j$ and $\mathrm{L}_{r_j}=[0,1]$. Then replace the vectors $f_q$ given in \eqref{eq:definitionFAkLomega} with
\begin{equation*}
    f_q\coloneqq \sum_{j=0}^\infty q^j r_j \bm{1}_{B_j} \qquad(q\in(0,1)).
\end{equation*}

At this point, if $x$ is as in \eqref{eq: x=y+z} (with the extra factor $r_j$) and $h_j$ is as in \eqref{eq: sequence with omega lims}, take $j\in\omega$ with $h_j\neq0$. Then $x\cut_{B_j}=h_j r_j\cut_{B_j}\in \mathrm{L} (\mathfrak{c})$ (since $\mathrm{L}_{r_j}=[0,1]$). Thus, $x \in\mathrm{L} (\mathfrak{c})$, and we are done.
\end{rmk}

Finally, we cover the case of $\bigcup_{2\leq n<\omega}\mathrm{L}(n)$.
\begin{thm}\label{thm:finitepart} $\bigcup_{2\leq n<\omega}\mathrm{L}(n)$ is densely lineable in $\ell_\infty$.
\end{thm}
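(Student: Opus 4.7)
The plan is to reduce the statement to Corollary \ref{cor: dense lin iff} applied to a carefully chosen pair of subspaces. I would set $X \coloneqq \bigcup_{1\leq n<\omega}\mathrm{L}(n)$, which is a linear subspace of $\ell_\infty$ by Lemma \ref{lem:boundonnumberaccpoints}, and $Y \coloneqq \mathrm{L}(1) = c$, so that $X \setminus Y$ is precisely the set of interest. The first preliminary step is to check that $X$ is dense in $\ell_\infty$: for $x \in \ell_\infty$ and $\varepsilon > 0$, the discretised sequence $y(n) \coloneqq \varepsilon \lfloor x(n)/\varepsilon \rfloor$ takes only finitely many values, hence $y \in X$, and $\|y-x\|_\infty \leq \varepsilon$. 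Consequently, $X$ is a metrisable topological vector space with $\dens(X) = \dens(\ell_\infty) = \mathfrak{c}$. Applying Corollary \ref{cor: dense lin iff} to the pair $(X,Y)$, the problem reduces to showing that $\dim(X/Y) \geq \mathfrak{c}$; any dense subspace of $X$ thus obtained will automatically be dense in $\ell_\infty$.

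To exhibit $\mathfrak{c}$-many vectors in $X$ that are linearly independent modulo $Y$, I would fix an infinite coinfinite set $B \subseteq \omega$ and choose an almost disjoint family $\{A_\alpha\}_{\alpha < \mathfrak{c}}$ of infinite subsets of $B$. Each indicator function $\bm{1}_{A_\alpha}$ belongs to $\mathrm{L}(2) \subseteq X \setminus Y$, so it remains to prove that the subspace $V \coloneqq \Span\{\bm{1}_{A_\alpha}\}_{\alpha < \mathfrak{c}}$ meets $Y$ only at $0$. Suppose $v = \sum_{i=1}^n c_i \bm{1}_{A_{\alpha_i}} \in Y$ converges to some $L \in \R$. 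Evaluating $v$ on the infinite set $\omega \setminus B$, which is disjoint from every $A_{\alpha_i}$, yields $L = 0$; evaluating $v$ on $A_{\alpha_j} \setminus \bigcup_{i \neq j} A_{\alpha_i}$, which is cofinite in the infinite set $A_{\alpha_j}$ by almost disjointness, then yields $c_j = L = 0$ for each $j$.

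The main subtlety lies in this last construction: a naive almost disjoint family need not yield independence modulo $c$. For instance, partitioning $\omega$ into three pairwise disjoint infinite sets produces an almost disjoint family whose indicator functions sum to $\bm{1}_\omega \in Y$, so they fail to be independent modulo $Y$. Confining the entire family inside a proper infinite coinfinite set $B$ is exactly what forces every non-trivial linear combination to vanish on an infinite set, from which one extracts first $L = 0$ and then the vanishing of every coefficient.
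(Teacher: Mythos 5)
Your proof is correct and takes essentially the same route as the paper: the same reduction via Corollary \ref{cor: dense lin iff} applied to $X=\bigcup_{1\leq n<\omega}\mathrm{L}(n)$ and $Y=\mathrm{L}(1)=c$, so that everything comes down to $\dim(X/Y)\geq\mathfrak{c}$. The paper certifies this either by noting that $X/c$ is dense in $\ell_\infty/c$ (which has density character $\mathfrak{c}$) or by taking an independent family $\{\bm{1}_A\colon A\in\mathscr{I}\}$; your almost disjoint family confined to an infinite coinfinite set $B$ is a correct small variant of the latter witness, and your remark on why a naive almost disjoint family would not suffice is accurate.
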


\begin{proof} In this case, we consider the linear subspaces of $\ell_\infty$ given by $X\coloneqq \bigcup_{n<\omega}\mathrm{L}(n)$ and $Y=\mathrm{L}(1)=c$ and, as above, we only need to prove that $\dim (X/Y)=\mathfrak{c}$. This is consequence of the fact that $X/c$ is dense in $\ell_\infty/c$, whose density character is $\mathfrak{c}$. Alternatively, one can take an independent family $\mathscr{I}\subseteq \mathcal{P} (\omega)$ of cardinality $\mathfrak{c}$; then it is easy to see that $\Span\{\bm{1}_A \colon A\in \mathscr{I}\}$ has dimension equal to $\mathfrak{c}$ and $\Span\{\bm{1}_A \colon A\in \mathscr{I}\}\cap \mathrm{L}(1) =\{0\}$.
\end{proof}

\section{Spaceability}\label{sec: spaceability}
In this section we focus on spaceability results for the sets $\bigcup_{2\leq n<\omega}\mathrm{L}(n)$, $\mathrm{L}(\omega)$, and $\mathrm{L}(\mathfrak{c})$. The main result is Theorem \ref{thm: finite not spaceble} asserting that $\bigcup_{2\leq n<\omega}\mathrm{L}(n)$ is not spaceable. A key ingredient in its proof is Theorem \ref{thm: interval not lin}, where we show that the subspace $\mathrm{L}(n)\cup \dots\cup \mathrm{L}(n+d)$ is $(d+1)$-lineable but not $(d+2)$-lineable. As a complement to this, we conclude the section with the easy result that $\mathrm{L}(\omega)$ and $\mathrm{L} (\mathfrak{c})$ are spaceable. 

The basic idea for the proof of Theorem \ref{thm: interval not lin} consists in finding certain linear combinations of vectors in a way to suitably increase or decrease the number of accumulation points. This will be achieved by means of the following lemmata. The first one will allow us to reduce the number of accumulation points as much as possible; the second asserts that small perturbations can't decrease the number of accumulation points; the last one claims that if no linear combination of two vectors increases the number of accumulation points, then the partitions associated to the vectors as in Lemma \ref{lem: mod c0} must be one finer than the other (modulo finite sets).
 
\begin{lem}\label{lem: decrease lims} 
Let $x_1,\dots,x_n\in \R^n$. Then there are scalars $c_1,\dots,c_n\in\R$, not all equal to zero, and $\gamma\in\R$ such that
\begin{equation*}
    c_1 x_1 +\dots+ c_n x_n =\gamma (1,\dots,1).
\end{equation*}
\end{lem}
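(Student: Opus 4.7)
The plan is to interpret the desired identity as a linear dependence among $n+1$ vectors in $\R^n$ and to exploit the fact that any such family is linearly dependent. More precisely, consider the $n+1$ vectors $x_1, \dots, x_n, \mathbf{1}$ in $\R^n$, where $\mathbf{1} \coloneqq (1,\dots,1)$. Since a family of $n+1$ vectors in $\R^n$ cannot be linearly independent, there exist scalars $c_1,\dots,c_n, c_{n+1}\in\R$, not all equal to zero, such that
\begin{equation*}
    c_1 x_1 + \dots + c_n x_n + c_{n+1} \mathbf{1} = 0.
\end{equation*}
Setting $\gamma \coloneqq -c_{n+1}$ then yields the required identity $c_1 x_1 + \dots + c_n x_n = \gamma \mathbf{1}$.

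The only remaining point is to verify that the $c_i$'s for $i\leq n$ are not all zero. If, on the contrary, $c_1=\dots=c_n=0$, then the displayed equation reduces to $c_{n+1}\mathbf{1}=0$, which forces $c_{n+1}=0$ as well, contradicting the non-triviality of the linear combination. There is no real obstacle here; the argument is a one-line application of the pigeonhole principle for dimensions, and the only subtlety worth stating explicitly is that the coefficient of $\mathbf{1}$ cannot be the only nonzero one.
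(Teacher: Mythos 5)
Your proof is correct, and it rests on the same elementary dimension-count as the paper's argument: the paper splits into two cases (if $x_1,\dots,x_n$ are independent they span $\R^n$, so some combination equals $(1,\dots,1)$; if they are dependent, a non-trivial combination gives $0$ and one takes $\gamma=0$), whereas you package both cases at once by observing that the $n+1$ vectors $x_1,\dots,x_n,(1,\dots,1)$ must be dependent and that the coefficient of $(1,\dots,1)$ cannot be the only nonzero one. This is a harmless repackaging of the same idea, and your check that $c_1,\dots,c_n$ are not all zero is exactly the point that needed to be verified.
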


\begin{proof} If the vectors $x_1,\dots,x_n$ are linearly independent, their linear span is $\R^n$, so there exists a linear combination that equals $(1,\dots,1)$. In the case they are linearly dependent, then there exists a non-trivial linear combination of them that gives $(0,\dots,0)$.
\end{proof}

\begin{lem}\label{lem: perturb many lims} Let $x\in\ell_\infty$ be a sequence with $|\mathrm{L}_x| <\infty$. There is $\e>0$ such that for all vectors $y\in\ell_\infty$ with $|\mathrm{L}_y|<\infty$ and $\|y\|<\e$,
\begin{equation*}
    |\mathrm{L}_{x+y}|\geq \max\{|\mathrm{L}_x|, |\mathrm{L}_y|\}.
\end{equation*}
\end{lem}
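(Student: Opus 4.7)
My plan is to explicitly exhibit enough accumulation points of $x+y$ by combining those of $x$ and $y$, exploiting that the accumulation points of $x$ are separated and that $y$ is small enough that the shifts coming from $\mathrm{L}_y$ cannot close this gap. The starting point is Lemma \ref{lem: mod c0}: I would write $x \sim_{c_0} \sum_{i=1}^{k} \xi_i \bm{1}_{S_i}$ and $y \sim_{c_0} \sum_{l=1}^{m} \eta_l \bm{1}_{T_l}$, where $\{S_i\}_{i=1}^{k}$ and $\{T_l\}_{l=1}^{m}$ are partitions of $\omega$ into infinite sets and $\mathrm{L}_x = \{\xi_1,\dots,\xi_k\}$, $\mathrm{L}_y = \{\eta_1,\dots,\eta_m\}$.

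Next I would choose $\e$ depending only on $x$: if $k \geq 2$ put $\e \coloneqq \frac{1}{3}\min\{|\xi_i - \xi_j| : i \neq j\}$, and otherwise let $\e$ be any positive number. For any $y$ with $\|y\| < \e$, each accumulation point $\eta_l$ satisfies $|\eta_l| \leq \|y\| < \e$ (since $|\eta_l| = \lim |y(n_k)| \leq \|y\|$ along a suitable subsequence). I claim the $km$ real numbers $\xi_i + \eta_l$, with $1 \leq i \leq k$ and $1 \leq l \leq m$, are pairwise distinct: for $i = i'$ distinctness follows from $\eta_l \neq \eta_{l'}$, while for $i \neq i'$ the separation $|\xi_i - \xi_{i'}| \geq 3\e$ beats the perturbation $|\eta_l - \eta_{l'}| \leq 2\e$.

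The core observation is then that whenever $S_i \cap T_l$ is infinite, the subsequence $(x+y)\cut_{S_i \cap T_l}$ converges to $\xi_i + \eta_l$, so $\xi_i + \eta_l \in \mathrm{L}_{x+y}$. Since $\bigcup_l T_l =^* \omega$ and each $S_i$ is infinite, for every $i$ there is at least one $l$ with $S_i \cap T_l$ infinite; symmetrically, for every $l$ there is at least one $i$ with $S_i \cap T_l$ infinite. Combined with the pairwise distinctness above, this yields at least $k$ distinct accumulation points of $x+y$ (one for each row $i$) and at least $m$ distinct accumulation points (one for each column $l$), which gives $|\mathrm{L}_{x+y}| \geq \max\{k,m\}$ as required.

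The only step that requires genuine care is the pairwise distinctness of the $\xi_i + \eta_l$, and this is essentially the whole content of the lemma; once $\e$ has been chosen as above, it is a one-line triangle-inequality argument, so I do not foresee any substantial obstacle. Everything else is straightforward bookkeeping with the canonical representation provided by Lemma \ref{lem: mod c0}.
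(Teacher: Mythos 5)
Your proof is correct and follows essentially the same route as the paper: both rely on the canonical representation from Lemma \ref{lem: mod c0}, choose $\e$ so that $\mathrm{L}_x$ is well separated, and obtain accumulation points $\xi_i+\eta_l$ of $x+y$ from infinite intersections $S_i\cap T_l$, with the separation of $\mathrm{L}_x$ together with $\|y\|<\e$ (hence $|\eta_l|<\e$) forcing these sums to be distinct. The only cosmetic difference is that you establish the pairwise distinctness of all $km$ sums up front and then count rows and columns symmetrically, whereas the paper argues by contradiction for the bound $|\mathrm{L}_{x+y}|\geq|\mathrm{L}_y|$ and leaves the symmetric half to the reader.
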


\begin{proof} Since $\mathrm{L}_x$ is a finite set, we may take $\e>0$ such that $\mathrm{L}_x$ is a $2\e$-separated set (\emph{i.e.}, $|\alpha-\beta|\geq2\e$ for distinct $\alpha,\beta\in \mathrm{L}_x$). Now take any $y\in\ell_\infty$ with $|\mathrm{L} _y|<\infty$ and $\|y\|<\e$. According to Lemma \ref{lem: mod c0}, we can write
\begin{equation*}
    x\sim_{c_0} \xi_1\bm{1}_{S_1} +\dots+ \xi_n\bm{1}_{S_n} \qquad\text{and}\qquad y\sim_{c_0} \eta_1\bm{1}_{T_1} +\dots+ \eta_k\bm{1}_{T_k}.
\end{equation*}
In order to check that $|\mathrm{L}_{x+y}|\geq |\mathrm{L}_y|=k$, fix $i\in\{1,\dots,k\}$ and take $j_i\in\{1,\dots,n\}$ such that $T_i\cap S_{j_i}$ is infinite. Therefore, $\eta_i + \xi_{j_i}$ is an accumulation point of $x+y$. Hence, if by contradiction $|\mathrm{L}_{x+y}| < |\mathrm{L}_y|=k$, there must be distinct indices $i,l\in\{1,\dots,k\}$ such that $\eta_i + \xi_{j_i} = \eta_l + \xi_{j_l}$. If $j_i=j_l$, we get the absurd that $\eta_i=\eta_l$. On the other hand, if $j_i\neq j_l$, then $2\e\leq |\xi_{j_i} -\xi_{j_l}|=|\eta_i - \eta_l|\leq 2\|y\|< 2\e$, a contradiction. The proof that $|\mathrm{L}_{x+y}|\geq |\mathrm{L}_x|$ is similar (starting with $i\in\{1,\dots,n\}$), therefore we omit it.
\end{proof}

\begin{lem}\label{lem: supporti inscatolati} Assume $x\in \mathrm{L}(n)$ and $y\in\mathrm{L}(k)$ have the representation
\begin{equation*}
    x\sim_{c_0} \xi_1\bm{1}_{S_1} +\dots+ \xi_n\bm{1}_{S_n} \qquad\text{and}\qquad y\sim_{c_0} \eta_1\bm{1}_{T_1} +\dots+ \eta_k\bm{1}_{T_k},
\end{equation*}
as in Lemma \ref{lem: mod c0}. Suppose also that $n\leq k$ and that every $z\in \Span\{x,y\}$ satisfies $|\mathrm{L}_z|\leq k$. Then for every $i\in\{1,\dots,k\}$, there exists $j\in\{1,\dots,n\}$ such that $T_i\subseteq^* S_j$. 
\end{lem}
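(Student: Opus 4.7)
The plan is to argue by contradiction: assume there exists $i_0\in\{1,\dots,n\}$ such that $T_{i_0}\not\subseteq^* S_j$ for every $j\in\{1,\dots,k\}$. Since $T_{i_0}=\bigsqcup_{j=1}^k (T_{i_0}\cap S_j)$, the failure of $T_{i_0}\subseteq^* S_j$ for each $j$ translates into the existence of at least two distinct indices $j_1,j_2\in\{1,\dots,k\}$ such that $T_{i_0}\cap S_{j_1}$ and $T_{i_0}\cap S_{j_2}$ are both infinite. For every $i\in\{1,\dots,n\}$, set $J_i\coloneqq\{j\in\{1,\dots,k\}\colon |T_i\cap S_j|=\omega\}$. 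Since each $T_i$ is infinite and finitely partitioned by the $S_j$'s, $|J_i|\geq1$, and by the above $|J_{i_0}|\geq2$, so $\sum_{i=1}^n |J_i|\geq n+1$.

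Next, I would exhibit a $z\in\Span\{x,y\}$ with $|\mathrm{L}_z|\geq n+1$, contradicting the hypothesis. For any $\alpha,\beta\in\R$ we have
\begin{equation*}
    z\coloneqq \alpha x+\beta y \;\sim_{c_0}\; \sum_{i=1}^n\sum_{j=1}^k (\alpha\xi_j+\beta\eta_i)\bm{1}_{T_i\cap S_j},
\end{equation*}
since $\{T_i\cap S_j\}_{i,j}$ partitions $\omega$ and the right-hand side is constant on each piece. Because $c_0$-perturbations do not affect accumulation points, and a value taken by a sequence only on a finite set cannot be an accumulation point, we deduce
\begin{equation*}
    \mathrm{L}_z=\{\alpha\xi_j+\beta\eta_i\colon i\in\{1,\dots,n\},\ j\in J_i\}.
\end{equation*}

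The key step is then to choose $(\alpha,\beta)$ so that these $\sum_i|J_i|\geq n+1$ values are pairwise distinct. For two distinct pairs $(i_1,j_1)\neq(i_2,j_2)$, the equality $\alpha\xi_{j_1}+\beta\eta_{i_1}=\alpha\xi_{j_2}+\beta\eta_{i_2}$ amounts to $\alpha(\xi_{j_1}-\xi_{j_2})+\beta(\eta_{i_1}-\eta_{i_2})=0$. Since the families $(\xi_j)_j$ and $(\eta_i)_i$ each consist of mutually distinct scalars, the coefficient vector $(\xi_{j_1}-\xi_{j_2},\eta_{i_1}-\eta_{i_2})$ is non-zero whenever $(i_1,j_1)\neq(i_2,j_2)$, so the above is a non-trivial linear equation, whose solution set is a one-dimensional subspace of $\R^2$. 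There are only finitely many such equations; hence one may choose $(\alpha,\beta)\in\R^2$ outside of this finite union of lines and outside the two coordinate axes. For this choice, $|\mathrm{L}_z|=\sum_{i=1}^n|J_i|\geq n+1$, contradicting the assumption and completing the proof.

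I expect the only mildly delicate point to be the identification of $\mathrm{L}_z$ with the set $\{\alpha\xi_j+\beta\eta_i\colon j\in J_i\}$ (as opposed to the whole set indexed by all $i,j$): one must verify that values of $\alpha\tilde x+\beta\tilde y$ occurring only on finitely many indices do not persist as accumulation points of $z$, which is immediate since $z\sim_{c_0}\alpha\tilde x+\beta\tilde y$ and the latter takes only finitely many values. Everything else is a routine verification.
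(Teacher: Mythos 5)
Your proof is correct, but it takes a genuinely different route from the paper. You pass to the common refinement $\{T_i\cap S_j\}_{i,j}$, observe that for any $z=\alpha x+\beta y$ one has $z\sim_{c_0}\sum_{i,j}(\alpha\xi_j+\beta\eta_i)\bm{1}_{T_i\cap S_j}$, so that $\mathrm{L}_z$ consists exactly of the values $\alpha\xi_j+\beta\eta_i$ over pairs with $T_i\cap S_j$ infinite, and then choose $(\alpha,\beta)$ generically (outside finitely many lines through the origin, each non-trivial because the $\xi_j$'s and the $\eta_i$'s are mutually distinct) so that these at least $n+1$ values are pairwise distinct. The paper instead argues by perturbation: it takes $z=x+\e y$ for small $\e>0$ and invokes Lemma \ref{lem: perturb many lims} to see that $(x+\e y)\cut_{T_i}$ has at least two accumulation points while $(x+\e y)\cut_{\omega\setminus T_i}$ has at least $n-1$, with the two sets of accumulation points disjoint for small $\e$, again giving $|\mathrm{L}_z|\geq n+1$. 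Your genericity argument is self-contained (it does not need Lemma \ref{lem: perturb many lims}) and in fact yields slightly more: for generic coefficients the number of accumulation points equals the number of pairs $(i,j)$ with $T_i\cap S_j$ infinite, which is exactly the quantity $|\mathcal{E}|$ that the paper only introduces later in Lemma \ref{lem:improvedloweraccpoints}; the paper's route, on the other hand, recycles the perturbation lemma that it needs anyway for Theorems \ref{thm: interval not lin} and \ref{thm: finite not spaceble}, so it is shorter in context. The only delicate point in your write-up, identifying $\mathrm{L}_z$ with the values attained on infinite pieces, is handled correctly, since the $c_0$-representative takes only finitely many values.
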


\begin{proof} Suppose by contradiction that there is $i\in\{1,\dots,k\}$ such that $T_i \nsubseteq^* S_j$ for every $j\in\{1,\dots,n\}$. Then there are two distinct indices $j_1,j_2\in\{1,\dots,n\}$ such that $T_i\cap S_{j_1}$ and $T_i \cap S_{j_2}$ are both infinite. According to Lemma \ref{lem: perturb many lims}, for sufficiently small $\e>0$, $(x+\e y)\cut_{T_i}$ has at least two accumulation points (since $\xi_{j_1},\xi_{j_2}$ are accumulation points of $x\cut_{T_i}$) and $(x+\e y)\cut_{\omega \setminus T_i}$ has at least $k-1$ accumulation points ($y\cut_{\omega \setminus T_i}$ has $k-1$ accumulation points). Moreover, for small $\e>0$, the sets of accumulation points of the elements $(x+\e y)\cut_{T_i}$ and $(x+\e y)\cut_{\omega\setminus T_i}$ are disjoint. Thus, $x+\e y$ has at least $k+1$ accumulation points, and we are done.
\end{proof}

We are now ready for the first main result of the section.
\begin{thm}\label{thm: interval not lin} Let $n,d\in\omega$ be such that $n\geq 2$. Then $\mathrm{L}(n)\cup\dots\cup \mathrm{L}(n+d)$ is $(d+1)$-lineable, but not $(d+2)$-lineable.
\end{thm}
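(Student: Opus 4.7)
The theorem has two parts—exhibiting a $(d+1)$-dimensional subspace inside $(\mathrm{L}(n)\cup\cdots\cup\mathrm{L}(n+d))\cup\{0\}$ and ruling out any $(d+2)$-dimensional one—and my plan is to tackle them separately.

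For the $(d+1)$-lineability, I would use a Chebyshev-type construction. Partition $\omega=\bigsqcup_{j=0}^{n+d-1}P_j$ into infinite sets, pick distinct positive reals $r_0,\ldots,r_d$, all different from $1$ and chosen generically, and define $v_k\coloneqq\sum_{j=0}^{n+d-1}r_k^{\,j}\bm{1}_{P_j}$ for $k=0,\ldots,d$. A linear combination $\sum_k\alpha_kv_k$ takes the value $f(j)\coloneqq\sum_k\alpha_kr_k^{\,j}$ on $P_j$, so its number of accumulation points equals $|\{f(j):0\le j\le n+d-1\}|$, trivially at most $n+d$. For the lower bound $n$, each coincidence $f(y)=f(y')$ yields the homogeneous linear equation $\sum_k\alpha_k(r_k^{\,y}-r_k^{\,y'})=0$ on $\alpha\in\R^{d+1}$; if fewer than $n$ distinct values appeared, at least $d+1$ such equations would hold, and for generic $r_k$'s the corresponding system has only the trivial solution $\alpha=0$, the needed contradiction.

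For the non-$(d+2)$-lineability, I would argue by contradiction. Suppose $V\subseteq(\mathrm{L}(n)\cup\cdots\cup\mathrm{L}(n+d))\cup\{0\}$ has dimension $d+2$. Pick $v_0\in V\setminus\{0\}$ with $k\coloneqq|\mathrm{L}_{v_0}|$ maximal (so $n\le k\le n+d$) and, via Lemma~\ref{lem: mod c0}, write $v_0\sim_{c_0}\sum_{j=1}^k\xi_j\bm{1}_{S_j}$. For $y\in V$ of sufficiently small norm, Lemma~\ref{lem: perturb many lims} combined with the maximality of $k$ yields $|\mathrm{L}_{v_0+y}|=k$. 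Since every $z\in\Span\{v_0,v_0+y\}\subseteq V$ satisfies $|\mathrm{L}_z|\le k$, Lemma~\ref{lem: supporti inscatolati} applies, and because the two partitions both have cardinality $k$, the nesting property forces them to coincide modulo finite sets. Therefore $y=(v_0+y)-v_0$ is constant on each $S_j$ modulo $c_0$, and by scaling this holds for every $y\in V$. Since $V\cap c_0=\{0\}$ (as elements of $V\setminus\{0\}$ have at least $n\ge2$ accumulation points), $V$ embeds into $\Span\{\bm{1}_{S_1},\ldots,\bm{1}_{S_k}\}/c_0$, giving $d+2\le k$.

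To finish, choose a basis $v_1,\ldots,v_{d+2}$ of $V$, write $v_i\sim_{c_0}\sum_jb_{i,j}\bm{1}_{S_j}$, and consider the resulting rank-$(d+2)$ matrix $B=(b_{i,j})$ with $d+2$ rows and $k$ columns. Applying Lemma~\ref{lem: decrease lims} to the $d+2$ rows of the $(d+2)\times(d+2)$ submatrix formed by the first $d+2$ columns of $B$ produces scalars $c_1,\ldots,c_{d+2}$, not all zero, and $\gamma\in\R$ with $\sum_ic_ib_{i,j}=\gamma$ for $j=1,\ldots,d+2$. Then $w\coloneqq\sum_ic_iv_i$ is non-zero (by linear independence of the $v_i$'s) and satisfies
\[w\sim_{c_0}\gamma\,\bm{1}_{S_1\cup\cdots\cup S_{d+2}}+\sum_{j=d+3}^{k}\Bigl(\sum_ic_ib_{i,j}\Bigr)\bm{1}_{S_j},\]
whence $|\mathrm{L}_w|\le 1+(k-d-2)=k-d-1\le n-1<n$, contradicting $w\in V\setminus\{0\}$. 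The hardest step will be the genericity verification in the lineability half: one must ensure that for a suitably chosen tuple $(r_0,\ldots,r_d)$, no partition of $\{0,\ldots,n+d-1\}$ into fewer than $n$ classes is compatible with any $\alpha\ne0$. Once the three preparatory lemmata are in place, the remainder is essentially bookkeeping.
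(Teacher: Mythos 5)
Your second half (the non-$(d+2)$-lineability) is correct and is essentially the paper's own argument: take $k=|\mathrm{L}_{v_0}|$ maximal, use Lemma \ref{lem: perturb many lims} to keep the count under small perturbations, invoke Lemma \ref{lem: supporti inscatolati} and the equality of the two cardinalities to force the partitions to agree modulo finite sets, deduce that $V$ injects into $\bigl(\Span\{\bm{1}_{S_1},\dots,\bm{1}_{S_k}\}+c_0\bigr)/c_0$ so that $k\geq d+2$, and then apply Lemma \ref{lem: decrease lims} to the first $d+2$ coordinates to manufacture a non-zero $w\in V$ with at most $k-d-1\leq n-1$ accumulation points. Your "take $y$ of small norm, then rescale" step is a mild (and clean) variant of the paper's perturbation of a basis into $\mathrm{L}(N)$; everything checks out.

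The gap is in the $(d+1)$-lineability half, and it sits exactly where the paper itself places the difficulty: the assertion "for generic $r_k$'s the corresponding system has only the trivial solution" is precisely the content of Proposition \ref{prop: comb lin many values}, which the paper explicitly says it could not prove shortly and to which it devotes a separate recursive construction (choosing the $n+d$ vectors one by one so as to avoid finitely many proper affine subspaces). In your proposal this claim is only asserted and deferred as a "verification", and as stated it is not even quite right: if the $d+1$ coincidence equations are chosen arbitrarily, the difference vectors can be linearly dependent for \emph{every} choice of $(r_0,\dots,r_d)$ — e.g.\ the three pairs coming from a cycle $f(y)=f(y')=f(y'')$ give vectors summing to $0$ — so you must first select the pairs along a spanning forest of the level-set partition (which still yields at least $(n+d)-(n-1)=d+1$ of them) and then show that for each forest $F$ with edges $(a_i,b_i)$ the determinant $\det\bigl[r_k^{a_i}-r_k^{b_i}\bigr]_{i,k}$ is not the zero polynomial; note that naive leading-term arguments fail here, since for a "star" of coincidences the top-degree terms cancel. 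The statement is true and your route can be completed: a relation $\sum_i c_i\bigl(r^{a_i}-r^{b_i}\bigr)\equiv 0$ would, by linear independence of monomials, give a relation among the edge-incidence vectors of $F$, impossible for a forest; and linearly independent polynomials admit evaluation nodes with non-singular evaluation matrix, so each forest determinant is a non-zero polynomial and any tuple outside the finite union of their zero sets is "generic" in your sense. With that argument inserted, your Vandermonde-type construction is a genuine alternative to Proposition \ref{prop: comb lin many values}; without it, the first half is an assertion rather than a proof.
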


\begin{proof} We start by showing that $\mathrm{L}(n)\cup\dots\cup \mathrm{L}(n+d)$ is $(d+1)$-lineable. We claim that there are vectors $v_1,\dots,v_{n+d}\in \R^{d+1}$ such that, for all non-zero $\alpha\coloneqq (\alpha_0,\dots, \alpha_d)\in \R^{d+1}$ the set $\{\alpha\cdot v_j\}_{j=1}^{n+d}$ has cardinality at least $n$ ($\alpha\cdot v_j$ is the inner product of the vectors $\alpha$ and $v_j$ in $\R^{d+1}$). Since we didn't find a short proof of this claim, we decided to postpone its proof until Proposition \ref{prop: comb lin many values}. So, assuming the validity of the claim for now, take vectors $v_1,\dots,v_{n+d}\in \R^{d+1}$ as above and let $(B_j)_{j=1}^{n+d}$ be a partition of $\omega$ into infinite sets. For $k\in\{0,\dots, d\}$, define the vector
\begin{equation*}
    e_k\coloneqq \sum_{j=1}^{n+d} \bm{1}_{B_j} v_j(k)
\end{equation*}
and let $V\coloneqq \Span\{e_k\}_{k=0}^d$. Since $V\subseteq \Span\{\bm{1}_{B_1}, \dots, \bm{1}_{B_{n+d}}\}$ and the sets $B_j$ are disjoint and infinite, it follows that every vector in $V$ has at most $n+d$ accumulation points. Thus, we only need to prove that every non-zero vector in $V$ has at least $n$ accumulation points. Take scalars $\alpha_0,\dots, \alpha_d$, not all equal to $0$, and note that
\begin{equation*}
    \sum_{k=0}^d \alpha_k e_k= \sum_{j=1}^{n+d}  \left(\sum_{k=0}^d \alpha_k v_j(k)\right) \bm{1}_{B_j} = \sum_{j=1}^{n+d} \alpha\cdot v_j \bm{1}_{B_j}.
\end{equation*}
Once more, the fact that the sets $B_j$ are disjoint and infinite yields that the accumulation points of $\sum_{k=0}^d \alpha_k e_k$ are exactly
\begin{equation*}
    \{\alpha\cdot v_j \}_{j=1}^{n+d}.
\end{equation*}
By our assumption, such a set has cardinality at least $n$, as desired. \smallskip

Next, we shall show that $\mathrm{L}(n)\cup\dots\cup \mathrm{L}(n+d)$ is not $(d+2)$-lineable. Therefore, we fix $n\geq 2$ and $d\in\omega$ and assume, towards a contradiction, that $V$ is a vector space of dimension $d+2$ and $V\subseteq \mathrm{L}(n)\cup\dots \cup \mathrm{L}(n+d)\cup\{0\}$. Define $N\in\omega$ to be
\begin{equation*}
    N\coloneqq \max\{|\mathrm{L}_x|\colon x\in V\};
\end{equation*}
our assumption yields that $N\leq n+d$. Moreover, we can select $e_1\in V\cap \mathrm{L}(N)$; hence we can find a basis $\{e_1,\tilde{e}_2,\dots, \tilde{e}_{d+2}\}$ of $V$ that contains $e_1$. For $\e>0$ sufficiently small, the vectors $e_k\coloneqq \tilde{e}_k + \e e_1$ ($k=2,\dots, d+2$) belong to $\mathrm{L}(N)$: indeed, on the one hand, $|\mathrm{L}_{e_k}|\geq |\mathrm{L}_{e_1}|=N$ by Lemma \ref{lem: perturb many lims} and, on the other one, $|\mathrm{L}_{e_k}| \leq N$ by definition of $N$. Consequently, the set $\{e_1,\dots,e_{d+2}\}$ forms a basis of $V$ and each $e_k$ belongs to $\mathrm{L}(N)$.

Lemma \ref{lem: mod c0} allows us to write
\begin{equation*}
    e_1\sim_{c_0} \xi_1\bm{1}_{S_1} +\dots+ \xi_N\bm{1}_{S_N} \qquad\text{and}\qquad e_2\sim_{c_0} \eta_1\bm{1}_{T_1} +\dots+ \eta_N\bm{1}_{T_N}.
\end{equation*}
Since every vector in the linear span of $\{e_1,e_2\}$ has at most $N$ accumulation points, an appeal to Lemma \ref{lem: supporti inscatolati} assures us that for every $i\in\{1,\dots,N\}$ there is $j_i\in\{1,\dots,N\}$ such that $S_i\subseteq^* T_{j_i}$. $\{S_1,\dots,S_N\}$ being a partition, we conclude that indeed $S_i=^* T_{j_i}$. Up to a permutation in the representation of $e_2$, we can assume that $S_i=^* T_i$ for every $i\in\{1,\dots,N\}$. If we repeat the same argument with $e_1$ and $e_k$ for every $k\in\{3,\dots,d+2\}$, we obtain, for every $k\in\{1,\dots,d+2\}$ mutually distinct scalars $\xi_1(k),\dots, \xi_N(k)$ such that
\begin{equation*}
    e_k \sim_{c_0} \xi_1(k) \bm{1}_{S_1} +\dots+ \xi_N(k) \bm{1}_{S_N}.
\end{equation*}

Before we continue, let us observe that necessarily $N\geq d+2$. Indeed, if not, the vectors $\{\xi_1(k) \bm{1}_{S_1} +\dots+ \xi_N(k) \bm{1}_{S_N}\}_{k=1}^{d+2}$ would be linearly dependent, so there would exist scalars $\alpha_1,\dots,\alpha_{d+2}$, not all equal to zero and such that the corresponding linear combination of the vectors $\{\xi_1(k) \bm{1}_{S_1} +\dots+ \xi_N(k) \bm{1}_{S_N}\}_{k=1}^{d+2}$ would be equal to $0$. Hence $\alpha_1 e_1 +\dots+ \alpha_{d+2} e_{d+2}\in c_0$, a contradiction (note that the vector $\alpha_1 e_1 +\dots+ \alpha_{d+2} e_{d+2}$ cannot be equal to $0$, since the vectors $e_k$ are linearly independent by construction).

Since $N\geq d+2$, we can consider the vectors
\begin{equation*}
    \left(\xi_1(k),\dots, \xi_{d+2}(k)\right) \in \R^{d+2} \qquad (k\in\{1,\dots,d+2\})
\end{equation*}
and apply Lemma \ref{lem: decrease lims} to them. This yields us scalars $\alpha_1,\dots,\alpha_{d+2}\in\R$, not all equal to zero, and $\gamma \in\R$ such that
\begin{equation*}
    \sum_{k=1}^{d+2} \alpha_k\left(\xi_1(k),\dots, \xi_{d+2}(k)\right)= \gamma(1,\dots,1).
\end{equation*}
Consequently, we have
\begin{equation*}
    \sum_{k=1}^{d+2} \alpha_k e_k \sim_{c_0} \gamma \bm{1}_{S_1\cup\dots\cup S_{d+2}}+ \left(\sum_{k=1} ^{d+2} \alpha_k \xi_{d+3}(k)\right) \bm{1}_{S_{d+3}} +\dots+ \left(\sum_{k=1}^{d+2} \alpha_k \xi_N(k)\right) \bm{1}_{S_N}.
\end{equation*}
From this equation we conclude that the non-zero vector $\sum_{k=1}^{d+2} \alpha_k e_k$ (let us recall that $\{e_1,\dots,e_{d+2}\}$ is a linear basis of $V$) has at most $(N-d-1)$-accumulation points. Finally, recalling that $N\leq n+d$, we reach the contradiction that $\sum_{k=1}^{d+2} \alpha_k e_k$ has at most $(n-1)$-accumulation points.
\end{proof}

As a particular case we have the following result. Note that it directly yields that the set $\bigcup_{n\in A}\mathrm{L}(n)$ is never lineable, when $A$ is finite. Also, in case the set $A$ is not an interval, this corollary might fail to be sharp (see Corollary \ref{cor:LnLnplusdnot2lineable}).
\begin{cor}\label{cor: lims in A not lin} Let $A$ be a non-empty finite subset of $\omega$ such that $\min A\geq 2$. Then $\bigcup_{n\in A}\mathrm{L}(n)$ is not $(\mathrm{diam}(A)+2)$-lineable, where $\mathrm{diam}(A)\coloneqq \max A -\min A$. 
\end{cor}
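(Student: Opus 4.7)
The plan is to derive this corollary as a direct monotonicity consequence of Theorem \ref{thm: interval not lin}. Write $n \coloneqq \min A$ and $d \coloneqq \mathrm{diam}(A) = \max A - n$, so that $n \geq 2$ and $A \subseteq \{n, n+1, \dots, n+d\}$. This set-theoretic containment lifts to the obvious inclusion
\begin{equation*}
    \bigcup_{m \in A} \mathrm{L}(m) \;\subseteq\; \mathrm{L}(n)\cup\mathrm{L}(n+1)\cup\dots\cup \mathrm{L}(n+d).
\end{equation*}
Any linear subspace $V$ witnessing $(d+2)$-lineability of the left-hand side, \emph{i.e.} with $\dim V = d+2$ and $V\subseteq \bigcup_{m\in A}\mathrm{L}(m)\cup\{0\}$, would then also satisfy $V\subseteq \mathrm{L}(n)\cup\dots\cup \mathrm{L}(n+d)\cup \{0\}$. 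This would contradict the second half of Theorem \ref{thm: interval not lin}, which says precisely that $\mathrm{L}(n)\cup\dots\cup\mathrm{L}(n+d)$ fails to be $(d+2)$-lineable for $n\geq 2$.

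There is no genuine obstacle here, since the corollary is a direct translation of the upper bound already established in Theorem \ref{thm: interval not lin}; the only thing to state is the monotonicity of $k$-lineability under inclusions of subsets (if $M\subseteq M'$ and $M$ is $k$-lineable, then so is $M'$). Thus the proof amounts to one displayed inclusion and one citation.
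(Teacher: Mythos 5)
Your argument is correct and is exactly the intended one: the paper states the corollary as an immediate particular case of Theorem \ref{thm: interval not lin}, relying on the same inclusion $\bigcup_{m\in A}\mathrm{L}(m)\subseteq \mathrm{L}(\min A)\cup\dots\cup\mathrm{L}(\max A)$ and the monotonicity of lineability under set inclusion. Nothing is missing.
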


We are now finally in position to pass to spaceability results. We first give the main result of the section concerning $\bigcup_{2\leq n<\omega}\mathrm{L}(n)$ and we then conclude the section with the simpler result for $\mathrm{L}(\omega)$ and $\mathrm{L}(\mathfrak{c})$.

\begin{thm}\label{thm: finite not spaceble}
$\bigcup_{2\leq n<\omega}\mathrm{L}(n)$ is not spaceable in $\ell_\infty$.
\end{thm}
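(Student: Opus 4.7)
The plan is to argue by contradiction, assuming there exists a closed infinite-dimensional subspace $V \subseteq \bigcup_{2\leq n<\omega}\mathrm{L}(n)\cup\{0\}$, and then construct a non-zero element of $V$ with infinitely many accumulation points.

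\textbf{Step 1 (unbounded accumulation counts inside $V$).} For each $N\geq 2$, Corollary \ref{cor: lims in A not lin} applied to $A=\{2,\dots,N\}$ gives that $\bigcup_{n=2}^{N}\mathrm{L}(n)$ is not $N$-lineable; since $V$ is infinite-dimensional, it cannot be contained in $\bigcup_{n=2}^{N}\mathrm{L}(n)\cup\{0\}$. Hence there are vectors $v_n\in V$ with $|\mathrm{L}_{v_n}|\geq n$, and after normalisation I may assume $\|v_n\|=1$.

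\textbf{Step 2 (inductive construction of a series).} I will build positive scalars $(\alpha_n)$ with $\sum \alpha_n<\infty$ and partial sums $s_n\coloneqq \sum_{k=1}^n \alpha_k v_k$ such that, for every $n$,
\begin{equation*}
    |\mathrm{L}_{s_n}|\geq |\mathrm{L}_{v_n}| \qquad\text{and}\qquad 4\sum_{k>n}\alpha_k<\delta_n,
\end{equation*}
where $\delta_n>0$ denotes the minimum separation between the (finitely many) points of $\mathrm{L}_{s_n}$. At the inductive step, once $s_n$ is fixed I apply Lemma \ref{lem: perturb many lims} to $s_n$ to obtain a threshold $\e_n>0$, and then I choose $\alpha_{n+1}$ so small that $\alpha_{n+1}<\min\{\e_n,\delta_n/8\}$; this guarantees $|\mathrm{L}_{s_{n+1}}|\geq \max\{|\mathrm{L}_{s_n}|,|\mathrm{L}_{v_{n+1}}|\}$ by the lemma (using $\|v_{n+1}\|=1$). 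The constraint $4\sum_{k>n}\alpha_k<\delta_n$ is enforced by further shrinking each later $\alpha_k$; this is the routine diagonal bookkeeping.

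\textbf{Step 3 (passing to the limit).} Since $\sum_k \alpha_k<\infty$, the series $s\coloneqq \sum_k \alpha_k v_k$ converges in $\ell_\infty$, and $s\in V$ because $V$ is closed. Writing $s=s_n+r_n$ with $\|r_n\|\leq \sum_{k>n}\alpha_k<\delta_n/4$, I use the representation $s_n\sim_{c_0}\xi_1\bm{1}_{S_1}+\dots+\xi_{k_n}\bm{1}_{S_{k_n}}$ from Lemma \ref{lem: mod c0}: on each infinite block $S_i$, the values of $s$ are eventually within $\|r_n\|<\delta_n/4$ of $\xi_i$, so $s\cut_{S_i}$ has at least one accumulation point in the closed interval of radius $\delta_n/4$ centred at $\xi_i$. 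By the definition of $\delta_n$ these intervals are pairwise disjoint, hence $|\mathrm{L}_s|\geq k_n=|\mathrm{L}_{s_n}|\geq n$. Letting $n\to\infty$ yields $|\mathrm{L}_s|=\infty$; in particular $|\mathrm{L}_s|\geq 2$, so $s\neq 0$, and $s\notin \bigcup_{2\leq n<\omega}\mathrm{L}(n)$, contradicting $V\subseteq \bigcup_{2\leq n<\omega}\mathrm{L}(n)\cup\{0\}$.

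\textbf{Main obstacle.} The delicate point is Step 3: Lemma \ref{lem: perturb many lims} only handles perturbations with finitely many accumulation points, whereas the tail $r_n$ may easily have infinitely many. This is circumvented by rerunning the block-wise argument used in the proof of that lemma directly, exploiting the representation of $s_n$ modulo $c_0$ and the quantitative control on $\|r_n\|$ relative to $\delta_n$ secured by the inductive construction.
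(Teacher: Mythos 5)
Your proposal is correct, and its skeleton coincides with the paper's proof: both use Theorem \ref{thm: interval not lin} (your Corollary \ref{cor: lims in A not lin}) to extract from the hypothetical closed subspace vectors with unboundedly many accumulation points, then build a rapidly convergent series of such vectors whose partial sums have more and more accumulation points via Lemma \ref{lem: perturb many lims}, and finally use closedness to place the sum in the subspace and derive a contradiction. The one genuine divergence is the limit step, which you rightly flag as the delicate point. The paper sidesteps the difficulty you circumvent by hand: it formulates the inductive condition as ``$\e_0y_0+\dots+\e_ky_k+y$ has at least $N_k$ accumulation points for every $y\in Y$ with $\|y\|\leq 2\e_{k+1}$'' and then observes that the tail $\sum_{j>k}\e_jy_j$ is itself an element of the closed subspace $Y$, hence has \emph{finitely} many accumulation points, so Lemma \ref{lem: perturb many lims} applies to it directly with the built-in bound $2\e_{k+1}$. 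You instead rerun the block argument of that lemma quantitatively, controlling $\|r_n\|$ against the separation $\delta_n$ of $\mathrm{L}_{s_n}$ via the representation of Lemma \ref{lem: mod c0}; this amounts to noting that the finiteness hypothesis on the perturbation in Lemma \ref{lem: perturb many lims} is not needed for the lower bound $|\mathrm{L}_{x+y}|\geq|\mathrm{L}_x|$. Both routes are valid: the paper's is shorter and avoids the $\delta_n$ bookkeeping (note that in your setting too $r_n=s-s_n\in V$, so you could have invoked the lemma directly had you budgeted the whole tail, rather than one term, under the threshold $\e_n$), while yours is self-contained and proves a slightly more robust perturbation statement along the way.
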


\begin{proof} Towards a contradiction, assume that there is a closed, infinite-dimensional subspace $Y$ of $\ell_\infty$ such that $Y\subseteq \bigcup_{2\leq n<\omega} \mathrm{L}(n)\cup\{0\}$. According to Theorem \ref{thm: interval not lin}, $Y$ is contained in $\bigcup_{2\leq n\leq N} \mathrm{L}(n) \cup\{0\}$ for no $N\in\omega$, so $Y\cap \mathrm{L}(n)$ is non-empty for infinitely many $n\in\omega$. We shall build by induction a sequence $(\e_k)_{k\in\omega}$ of positive scalars with $\e_{k+1}\leq \frac{1}{2}\e_k$ for every $k\in\omega$, a sequence $(y_k)_{k\in\omega}$ of unit vectors in $Y$, and a strictly increasing sequence $(N_k)_{k\in\omega}$ of natural numbers, with the following properties (for every $k\in\omega$):
\begin{romanenumerate}
    \item $\e_0 y_0+\dots+ \e_k y_k \in \mathrm{L}(N_k)$,
    \item\label{item: many lims in perturbations} $\e_0 y_0+\dots+ \e_k y_k+ y \in \bigcup_{N_k\leq n<\omega}\mathrm{L}(n)$ for every $y\in Y$ with $\|y\| \leq 2 \e_{k+1}$. 
\end{romanenumerate}
Indeed, to start the induction, we set $\e_0\coloneqq  1$, we take any unit vector $y_0\in\ Y$ and we set $N_0 \coloneqq |\mathrm{L}_{y_0}|$. Assuming inductively to have already found $(\e_j)_{j\leq k}$, $(y_j)_{j\leq k}$, and $(N_j)_{j\leq k}$ as above, we apply Lemma \ref{lem: perturb many lims} to the vector $\e_0 y_0 +\dots+ \e_k y_k$ and we find $\e_{k+1}$ such that $\e_0 y_0+\dots+ \e_k y_k+y$ has at least $\max\{N_k,|\mathrm{L}_y|\}$ accumulation points for every $y\in Y$ with $\|y\|\leq 2\e_{k+1}$; clearly, we can also assume $2 \e_{k+1}\leq \e_k$. Since $Y\cap \mathrm{L}(n) \neq\emptyset$ for infinitely many $n\in\omega$, we are now in position to take a unit vector $y_{k+1}\in Y$ with $|\mathrm{L}_{y_{k+1}}|>N_k$. By Lemma \ref{lem: perturb many lims}, the cardinality of the accumulation points of $\e_0 y_0+\dots+ \e_{k+1} y_{k+1}$, which we denote $N_{k+1}$, is greater than $N_k$. This concludes the induction step.

Finally, since $Y$ is closed, $y\coloneqq \sum_{k=0}^\infty \e_k y_k\in Y$. However, for every $k\in\omega$ we have
\begin{equation*}
    \left\| \sum_{j=k+1}^\infty \e_j y_j\right\|\leq \sum_{j=k+1}^\infty \e_j\leq \sum_{j=0}^\infty 2^{-j}\e_{k+1} =2\e_{k+1}. 
\end{equation*}
Hence, if we write
\begin{equation*}
    y=\e_0 y_0+\dots+ \e_k y_k + \sum_{j=k+1} ^\infty \e_j y_j,
\end{equation*}
we see from (\ref{item: many lims in perturbations}) that $|\mathrm{L}_y|\geq N_k$. Since $k\in\omega$ was arbitrary and $N_k\to\infty$ as $k\to\infty$, we conclude that $y\notin \bigcup_{2 \leq n<\omega}\mathrm{L}(n)$, a contradiction.
\end{proof}

\begin{thm}\label{thm:Lomegaspaceable}
$\mathrm{L}(\omega)$ and $\mathrm{L}(\mathfrak{c})$ are spaceable in $\ell_\infty$. More precisely, $\mathrm{L}(\omega) \cup\{0\}$ contains $c_0$ isometrically and $\mathrm{L}(\mathfrak{c}) \cup\{0\}$ contains $\ell_\infty$ isometrically.
\end{thm}
\begin{proof} 
We first consider the case of $\mathrm{L}(\omega)$. Let $(A_{n,k})_{n,k \in \omega}$ be a partition of $\omega$ into infinite sets and define the vectors 
\begin{equation}\label{eq: en for spaceable}
e_n\coloneqq \sum_{k=0}^\infty a_k\cdot \bm{1}_{A_{n,k}},
\end{equation}
where $a_k=2^{-k}$ for $k\in\omega$. Each $e_n$ is a unit vector and $e_n\in \mathrm{L}(\omega)$ for each $n\in\omega$. Moreover, $\suppt(e_n)= \bigcup_{k\in\omega} A_{n,k}$, hence the vectors $e_n$ are disjointly supported. Thus the map $(\alpha_n)_{n\in\omega}\mapsto \sum_{n=0}^\infty \alpha_n e_n$ is an isometry from $c_0$ onto $Y\coloneqq \closedSpan\{e_n\}_{n\in\omega}$ and each non-zero element of $Y$ belongs to $\mathrm{L} (\omega)$. Indeed, if $x\coloneqq \sum_{n=0}^\infty \alpha_n e_n \in Y$, then $\mathrm{L}_x=\{\alpha_n \cdot a_k\}_{n,k \in\omega} \cup \{0\}$ (since both $\alpha_n$ and $a_k$ tend to $0$). If additionally $x\in Y \setminus\{0\}$, then some $\alpha_n$ is non-zero, whence $|\mathrm{L}_x|=\omega$, as desired.

For the case of $\mathrm{L}(\mathfrak{c})$, we replace the sequence $a_k=2^{-k}$ with an enumeration $(a_k)_{k\in\omega}$ of the rationals in $(0,1)$. The definition of the vectors $e_n$ is the same with the unique difference that the series defining $e_n$ only converges in the pointwise topology. Now the subspace $Y$ is defined as
\begin{equation*}
    Y\coloneqq \left\{ \sum_{n=1}^\infty \alpha_n e_n\colon (\alpha_n)_{n\in\omega} \in \ell_\infty \right\}
\end{equation*}
(where, as before, the series converges pointwise). Since the vectors $e_n$ are disjointly supported unit vectors, the map $(\alpha_n) _{n\in\omega} \mapsto \sum_{n=1}^\infty \alpha_n e_n$ defines an isometry of $\ell_\infty$ onto $Y$. Finally, as before, we see that $Y\setminus \{0\}\subseteq \mathrm{L}(\mathfrak{c})$, since here $\mathrm{L}_{e_n}=[0,1]$.

\end{proof}

\section{Finer lineability results}\label{sec: finer}
In this section we delve deeper into lineability results for the set $\bigcup_{n \in A}\mathrm{L}(n)$, where $A$ is a (finite) subset of $\omega$ such that $\min A\geq 2$. In the first result we prove Proposition \ref{prop: comb lin many values}, whose validity was claimed during the proof of Theorem \ref{thm: interval not lin}. Next, we give some results that show how more complicated the situation is when $A$ is not an interval. In particular, there are infinite sets $A$ such that $\bigcup_{n \in A}\mathrm{L}(n)$ is not $2$-lineable (Corollary \ref{cor:infinitesetnot2lineable}) and, on the other hand, there are sets $A$ that do not contain non-trivial intervals and such that $\bigcup_{n \in A}\mathrm{L}(n)$ is $\mathfrak{c}$-lineable (Theorem \ref{th: odd c lineable}).

\begin{prop}\label{prop: comb lin many values} Let $n,d\in\omega$ with $n\geq 1$. Then there are vectors $\{v_1,\dots,v_{n+d}\}\in \R^{d+1}$ such that, for all non-zero $\alpha\coloneqq (\alpha_0,\dots,\alpha_d)\in \R^{d+1}$, the set $\{\alpha\cdot v_k\}_{k=1}^{n+d}$ has cardinality at least $n$.
\end{prop}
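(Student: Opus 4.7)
My plan is to prove the claim by a generic-position (dimension-count) argument.

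First I reformulate the problem combinatorially: for each non-zero $\alpha\in\R^{d+1}$, the equivalence relation $i\sim_\alpha j \iff \alpha\cdot v_i=\alpha\cdot v_j$ partitions $\{1,\dots,n+d\}$ into exactly $|\{\alpha\cdot v_k\}_{k=1}^{n+d}|$ classes. Hence the desired inequality holds iff, for every partition $\pi=\{S_1,\dots,S_l\}$ of $\{1,\dots,n+d\}$ into $l\leq n-1$ nonempty classes, the only $\alpha\in\R^{d+1}$ which is constant on each $S_i$ is $\alpha=0$. After fixing representatives $j_i\in S_i$, this is in turn equivalent to demanding that the $n+d-l\geq d+1$ ``base differences'' $v_k-v_{j_i}$ (for $k\in S_i\setminus\{j_i\}$) span $\R^{d+1}$.

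For each such partition $\pi$, let $B_\pi\subseteq(\R^{d+1})^{n+d}$ denote the set of tuples whose base differences fail to span $\R^{d+1}$. This is a real algebraic subvariety, cut out by the simultaneous vanishing of all $(d+1)\times(d+1)$ minors of the matrix of base differences. To witness that $B_\pi$ is a \emph{proper} subvariety, I would exhibit an explicit ``good'' tuple outside it: set $v_{j_i}=0$ for each base $j_i$, enumerate the non-base indices as $i_1,\dots,i_{n+d-l}$, and declare $v_{i_s}=e_{s-1}$ (the $s$-th standard basis vector of $\R^{d+1}$) for $s=1,\dots,d+1$ and $v_{i_s}=0$ for $s>d+1$. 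The base differences of this tuple include $e_0,\dots,e_d$, which span $\R^{d+1}$; so the tuple lies outside $B_\pi$. This uses only $l\leq n-1$, which guarantees that at least $d+1$ non-base indices are available to receive the basis vectors.

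Since partitions of $\{1,\dots,n+d\}$ into at most $n-1$ nonempty classes are finite in number, $\bigcup_\pi B_\pi$ is a finite union of proper real algebraic subvarieties of $(\R^{d+1})^{n+d}$; its complement is therefore nonempty (indeed open and dense in the Euclidean topology), and any tuple in the complement satisfies the required property. The main delicacy lies in the per-partition properness check, handled uniformly by the explicit construction above. A more concrete alternative---say $v_k=(t_k,t_k^2,\dots,t_k^{d+1})$ with the $t_k$'s chosen to avoid certain symmetric-polynomial identities reminiscent of Sidon sets---would also work, but verifying the spanning condition partition by partition requires intricate case analysis that the present generic-position framework neatly sidesteps.
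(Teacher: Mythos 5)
Your proof is correct, and it rests on the same combinatorial reduction as the paper's: a nonzero $\alpha$ taking at most $n-1$ values on $\{v_1,\dots,v_{n+d}\}$ is the same as $\alpha$ being orthogonal to all the differences $v_k-v_{j_i}$ attached to some partition into at most $n-1$ classes, and since $l\leq n-1$ leaves at least $d+1$ such differences, it suffices that for every such partition these differences span $\R^{d+1}$. Where you diverge is in how the existence of a good tuple is established. The paper builds the vectors one at a time: it maintains the invariant that for every partition of the vectors chosen so far the associated differences are linearly independent, and observes that the new vector only has to avoid finitely many proper affine subspaces $w_{n-1}+H$, one per partition; the counting that there are exactly $d+1$ differences at the end (so they form a basis) is done explicitly. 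You instead argue globally in the space $(\R^{d+1})^{n+d}$ of all tuples: for each partition the "bad" tuples form the zero set of the $(d+1)\times(d+1)$ minors of the difference matrix, your explicit degenerate witness (representatives at $0$, the first $d+1$ non-base vectors equal to the standard basis) shows some minor is a not-identically-zero polynomial, and a finite union of proper algebraic subvarieties cannot exhaust the tuple space, so a generic tuple works. Both are "finitely many bad sets cannot cover" arguments; yours is a one-shot genericity argument (and shows the good tuples are open and dense), while the paper's recursion is more elementary in that it only invokes the fact that $\R^{d+1}$ is not a finite union of proper affine subspaces, without speaking of varieties or minors. Note also that genericity automatically gives mutually distinct $v_k$'s (as the paper's construction arranges), although distinctness is not actually needed for the statement or its use in Theorem \ref{thm: interval not lin}.
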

We recall that we indicate by $\alpha\cdot v$ the inner product of the vectors $\alpha, v\in \R^{d+1}$. Note that, by Lemma \ref{lem: decrease lims}, there is a non-zero $\alpha\coloneqq (\alpha_0,\dots,\alpha_d)\in \R^{d+1}$ such that $\{\alpha\cdot v_k\}_{k=1}^{d+1}$ is a singleton. Hence, for such $\alpha$ the set $\{\alpha\cdot v_k\} _{k=1}^{n+d}$ has cardinality at most $n$, so the above result is sharp.

\begin{proof} The result is trivial for $n=1$, thus we assume that $n\geq 2$.  We begin by introducing a piece of notation. Assume that $\mathcal{V}=\{v_1,\dots,v_k\}$ (where $k\geq 1$) are vectors in $\R^{d+1}$ and $\{\mathcal{V}_1, \dots, \mathcal{V}_{n-1} \}$ is a partition of $\mathcal{V}$ in exactly $n-1$, possibly empty, sets. For every $j\in\{1, \dots, n-1\}$ such that $\mathcal{V}_j$ is non-empty we define a vector $w_j\in \mathcal{V}_j$ to be $w_j\coloneqq v_i$, where $i$ is the least index with $v_i\in \mathcal{V}_j$. Roughly speaking, $w_j$ is the `first' vector in $\mathcal{V}_j$. Moreover, we define sets
$$\mathcal{W}_j\coloneqq \big\{ v-w_j\colon v\in \mathcal{V}_j\setminus\{w_j\} \big\}, \,\,\text{ when }\,\, \mathcal{V}_j\neq \emptyset$$
and $\mathcal{W}_j=\emptyset$ otherwise. Finally, we say that the set $\mathcal{V}=\{v_1,\dots,v_k\}$ has the \emph{many increments property} (MIP, for short) if for every partition $\mathscr{V}=\{\mathcal{V}_1, \dots, \mathcal{V}_{n-1}\}$ of $\mathcal{V}$:
\begin{itemize}
    \item[(MIP1)] the sets $\{\mathcal{W}_j\}_{j=1}^{n-1}$ are pairwise disjoint, and
    \item[(MIP2)] setting $\mathcal{W}\coloneqq\bigcup_{j=1}^{n-1}\mathcal{W}_j$, $\Span\mathcal{W}$ has dimension at least $\min\{|\mathcal{W}|,d+1$\}.
\end{itemize}

\begin{claim}\label{claim: MIP} There exists a family $\mathcal{V}=\{v_1,\dots, v_{n+d}\}\subseteq \R^{d+1}$ consisting of mutually distinct vectors and having property (MIP).
\end{claim}

Assuming the claim for now, let us show that a family as in the claim also verifies the conclusion of the proposition. In fact, given such a $\mathcal{V}$, for any partition $\mathscr{V}=\{\mathcal{V}_1, \dots, \mathcal{V}_{n-1}\}$ of $\mathcal{V}$, the above set $\mathcal{W}$ satisfies $\Span(\mathcal{W})=\R^{d+1}$. Indeed, by (MIP1)
\begin{equation*}
    \left|\mathcal{W}\right| = \sum_{j=1}^{n-1} \left|\mathcal{W}_j\right| \geq \sum_{j=1}^{n-1} (|\mathcal{V}_j|-1)
    = |\mathcal{V}|-(n-1) = d+1.
\end{equation*}
So, $\Span(\mathcal{W})$ has dimension $d+1$ by (MIP2). Suppose now that $\alpha\coloneqq (\alpha_0,\dots, \alpha_d)\in \R^{d+1}$ is such that $\{\alpha\cdot v_k\}_{k=1}^{n+d}$ has cardinality at most $n-1$. Then there is a partition $\{\mathcal{V}_1, \dots, \mathcal{V}_{n-1}\}$ of $\mathcal{V}$ such that $\{\alpha\cdot v\colon v\in \mathcal{V}_j\}$ is at most a singleton for every $j\in\{1,\dots,n-1\}$ (in order to have exactly $n-1$ elements in the partition, some $\mathcal{V}_j$ might be empty). But this means that
\begin{equation*}
    \alpha\cdot (v-w_j)=0 \text{ for all } j \text{ such that } \mathcal{V}_j\neq \emptyset \,\,\,\text{and all } v\in \mathcal{V}_j\setminus\{w_j\}.
\end{equation*}
In other words, $\alpha$ is orthogonal to all the vectors in $\mathcal{W}$. Therefore, $\alpha$ is orthogonal to $\Span(\mathcal{W})$, which by our construction is equal to $\R^{d+1}$; thus $\alpha=0$, as desired. \smallskip

Therefore, we only need to prove Claim \ref{claim: MIP} and we build the vectors $\{v_1,\dots, v_{n+d}\}$ recursively (recall that $n$ and $d$ are fixed). Set $v_1\coloneqq 0$ and note that, up to relabelling, the unique partition $\{\mathcal{V}_1, \dots, \mathcal{V}_{n-1}\}$ of $\{v_1\}$ is given by $\mathcal{V}_1 =\{v_1\}$ and $\mathcal{V}_2 =\dots= \mathcal{V}_{n-1}= \emptyset$. Hence, $\mathcal{W}_j= \emptyset$ for every $j$, so the singleton $\{v_1\}$ satisfies (MIP). Suppose now that, for some $k\leq n+d-1$, we have already found vectors $\{v_1, \dots, v_k\}$ satisfying property (MIP). We now look for conditions on $v_{k+1}$ so that the property (MIP) holds also for $\{v_1,\dots,v_{k+1}\}$. First of all, we need $v_{k+1}\notin \{v_1,\dots,v_k\}$. Next, assume that $\mathscr{V}=\{\mathcal{V}_1, \dots, \mathcal{V}_{n-1}\}$ is a partition of $\{v_1,\dots,v_{k+1}\}$ and, up to relabelling the indices of the partition, that $v_{k+1}\in \mathcal{V}_{n-1}$. If $\mathcal{V}_{n-1}= \{v_{k+1}\}$, then $\mathcal{W}_{n-1}= \emptyset$, so (MIP1) and (MIP2) are satisfied because of the inductive assumption applied to the partition $\{\mathcal{V}_1, \dots, \mathcal{V}_{n-2}, \emptyset\}$ of $\{v_1, \dots, v_k\}$. 

Therefore, we assume that $\mathcal{V}_{n-1}$ is not a singleton, whence $w_{n-1}\neq v_{k+1}$, by definition of $w_{n-1}$. In order to satisfy condition (MIP1) for the partition $\mathscr{V}$, the vector $v_{k+1} - w_{n-1}$ should not belong to $\mathcal{W}_j$ for every $j\in \{1,\dots,n-2\}$. Since there are only finitely many partitions $\{\mathcal{V}_1, \dots, \mathcal{V}_{n-1}\}$ of $\{v_1,\dots,v_{k+1}\}$, we conclude that $v_{k+1}$ must be chosen outside a finite subset of $\R^{d+1}$. In order to verify (MIP2), we distinguish two cases. If the vectors in
\begin{equation}\label{eq: d or d+1}
\mathcal{W}_*\coloneqq \bigcup_{j= 1}^{n-2}\mathcal{W}_j \cup \big\{ v-w_{n-1} \colon v\in \mathcal{V}_{n-1} \setminus\{w_{n-1},v_{k+1}\} \big\}
\end{equation}
are at least $d+1$ in number, then their linear span has dimension at least $d+1$, by the (MIP2) property of $\{v_1, \dots, v_k\}$. \emph{A fortiori}, $\Span\mathcal{W}$ has dimension at least $d+1$, so no condition is imposed on $v_{k+1}$.

Otherwise, suppose that $\mathcal{W}_*$ has cardinality at most $d$. Therefore, the linear span of $\mathcal{W}_*$ is a proper subspace $H$ of $\R^{d+1}$, of dimension exactly $\left|\mathcal{W}_*\right|$ by (MIP2). Moreover, $\mathcal{W}= \mathcal{W}_*\cup \{v_{k+1} -w_{n-1}\}$. Hence, the vectors $\{v_1,\dots, v_{k+1}\}$ satisfy (MIP2) if and only if $v_{k+1}-w_{n-1}$ is linearly independent from $H$. In other words, if and only if $v_{k+1}$ does not belong to the proper affine subspace $w_{n-1}+H$. Consequently, since there are only finitely many partitions $\{\mathcal{V}_1,\ldots,\mathcal{V}_{n-1}\}$ of $\{v_1,\ldots,v_{k+1}\}$, then the vector $v_{k+1}$ must be chosen outside finitely many proper affine subspaces of $\R^{d+1}$. This yields that it is possible to select $v_{k+1}\notin \{v_1,\dots,v_k\}$ such that $\{v_1,\ldots,v_{k+1}\}$ satisfies property (MIP) and concludes the proof.
\end{proof}

For the second part of the section, it will be convenient to introduce the following notation. For each non-empty set $A\subseteq \omega$ with $\min A\geq 2$, define
\begin{displaymath}
\ell(A)\coloneqq\sup\left\{m \in \omega\colon \bigcup_{n \in A}\mathrm{L}(n) \text{ is }m\text{-lineable}\right\}.
\end{displaymath}

Note that Theorem \ref{thm: interval not lin} can be equivalently rewritten as $\ell(A)=|A|$ whenever $A\subseteq\omega$ is a finite non-empty interval with $\min A\geq 2$.
The same theorem also implies \begin{equation}\label{eq: l(A) larger than interval}
    \ell(A)\geq \sup\{|I|\colon I\subseteq A \text{ is an interval}\}
\end{equation}
whenever $A\subseteq \omega$ is a non-empty set  with $\min A\geq 2$. We will see in the forthcoming results that, when $A$ is not an interval, the inequality \eqref{eq: l(A) larger than interval} can be very far from being sharp. Indeed, we will see in Theorem \ref{th: odd c lineable} that there exist sets $A$ with $\ell(A)=\infty$ and which contain no non-trivial intervals. Before this, we prove the existence of infinite sets $A$ such that $\ell(A)=1$ (see Corollary \ref{cor:infinitesetnot2lineable}).

\begin{lem}\label{lem:improvedloweraccpoints} Fix vectors $x \in \mathrm{L}(n)$ and $y \in \mathrm{L}(k)$, for some $n,k \in \omega$, with representations
\begin{equation*}
    x \sim_{c_0} \xi_1 \bm{1}_{S_1}+\dots+\xi_n \bm{1}_{S_n} \quad \text{ and }\quad y \sim_{c_0} \eta_1 \bm{1}_{T_1}+\dots+\eta_k \bm{1}_{T_k},
\end{equation*}
respectively, as in Lemma \ref{lem: mod c0}. Define 
\begin{equation*}
    \mathcal{E}\coloneqq \left\{(i,j) \in \{1,\ldots,n\} \times \{1,\ldots,k\}\colon S_i\cap T_j \text{ is infinite}\right\}
\end{equation*}
and suppose that the points in 
\begin{equation*}
    \mathcal{P}\coloneqq \left\{(\xi_i,\eta_j)\colon (i,j) \in \mathcal{E}\right\}   
\end{equation*}
are not collinear. Then there is $z\in \Span\{x,y\}$ such that
\begin{equation*}
    \left\lfloor \frac{|\mathcal{E}|+1}{2}\right\rfloor \leq |\mathrm{L}_z| \leq |\mathcal{E}|-1.
\end{equation*}
\end{lem}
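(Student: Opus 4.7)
The plan is to view the problem as a planar geometry question: for any $(\alpha,\beta)\in \mathbb{R}^2\setminus\{(0,0)\}$, the combination $z=\alpha x+\beta y$ satisfies
\[
\mathrm{L}_z = \{\alpha\xi_i + \beta\eta_j \colon (i,j)\in\mathcal{E}\},
\]
because on each infinite intersection $S_i\cap T_j$ the sequence $z$ converges to $\alpha\xi_i+\beta\eta_j$, while the other intersections are finite and do not contribute. Writing $N\coloneqq|\mathcal{E}|$, the task reduces to finding a linear functional $\Phi_{\alpha,\beta}(p,q)\coloneqq \alpha p + \beta q$ that takes between $\lfloor(N+1)/2\rfloor$ and $N-1$ distinct values on the planar set $\mathcal{P}$.

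The upper bound is immediate: given any two distinct points $P_1,P_2\in\mathcal{P}$, choose $(\alpha,\beta)$ orthogonal to $P_1-P_2$; this collapses them and already forces $|\mathrm{L}_z|\leq N-1$. For the lower bound I would let $M$ denote the maximum number of points of $\mathcal{P}$ sitting on a single line and split into two cases. If $M\geq \lfloor(N+1)/2\rfloor$, take a line $\ell_0$ with $|\ell_0\cap \mathcal{P}|=M$, a point $Q\in \ell_0\cap \mathcal{P}$, and a point $P\in \mathcal{P}\setminus \ell_0$ (which exists by non-collinearity), and set $(\alpha,\beta)\perp(P-Q)$. Since the line $PQ$ is transverse to $\ell_0$, the $M$ points of $\ell_0\cap \mathcal{P}$ have pairwise distinct images under $\Phi_{\alpha,\beta}$, while the identification $P\sim Q$ keeps $|\mathrm{L}_z|\leq N-1$; hence $M\leq |\mathrm{L}_z|\leq N-1$, which is enough.

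The hard case is $M\leq \lfloor(N-1)/2\rfloor$. Here the trivial estimate $|\mathrm{L}_z|\geq \lceil N/M\rceil$, coming from the fact that no level set of $\Phi_{\alpha,\beta}$ carries more than $M$ points, is weaker than $\lfloor(N+1)/2\rfloor$ in general. To close the gap I would vary the pair $(P,Q)\in (\mathcal{P}\setminus \ell_0)\times (\ell_0\cap\mathcal{P})$: the $M$ lines through $\ell_0\cap\mathcal{P}$ parallel to $P-Q$ hold at most $M(M-1)$ points of $\mathcal{P}\setminus\ell_0$ altogether (each such line has at most $M$ points in total, and one of them already lies on $\ell_0$). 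A pigeonhole argument over the $M(N-M)$ available pairs should then furnish a choice for which enough points of $\mathcal{P}\setminus\ell_0$ fall on parallels disjoint from $\ell_0$, creating new values of $\Phi_{\alpha,\beta}$ and lifting $|\mathrm{L}_z|$ above $\lfloor(N+1)/2\rfloor$. The main obstacle I expect is in making this counting sharp: the $M$ parallels can absorb almost all of $\mathcal{P}\setminus\ell_0$ for individual pairs, so the good pair must be carefully extracted out of many potentially bad ones, giving the argument an incidence-counting flavour reminiscent of Sylvester--Gallai.
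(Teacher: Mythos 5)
Your reduction is the same as the paper's: for any $(\alpha,\beta)\neq(0,0)$ one has $\mathrm{L}_{\alpha x+\beta y}=\{\alpha\xi_i+\beta\eta_j\colon (i,j)\in\mathcal{E}\}$, so the lemma becomes a planar problem about how many distinct values a linear functional (equivalently, how many lines of a parallel family in a direction determined by two points of $\mathcal{P}$) must take on the non-collinear set $\mathcal{P}$. Your upper bound and your first case ($M\geq\lfloor(N+1)/2\rfloor$, where $M$ is the maximal number of points of $\mathcal{P}$ on one line) are correct: projecting along a direction $P-Q$ with $Q$ on a maximal line $\ell_0$ and $P\notin\ell_0$ separates the $M$ points of $\ell_0\cap\mathcal{P}$ and identifies $P$ with $Q$, giving $M\leq|\mathrm{L}_z|\leq N-1$.

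The gap is the complementary case $M\leq\lfloor(N-1)/2\rfloor$, which you only sketch and which is where the real difficulty sits. The counting you propose does not close: even after choosing a pair $(P,Q)$ so that few points of $\mathcal{P}\setminus\ell_0$ lie on the $M$ parallels through $\ell_0\cap\mathcal{P}$, the remaining points can still cluster, up to $M$ at a time, on parallels disjoint from $\ell_0$, and the resulting estimate degenerates to roughly $\lceil N/M\rceil$ distinct values, which is far below $\lfloor(N+1)/2\rfloor$ when $M$ is of order $\sqrt{N}$, say. Averaging over the $M(N-M)$ pairs is not shown to rule this out, and no elementary pigeonhole is known to do so: the statement you need --- a direction spanned by two points of $\mathcal{P}$ whose parallel family covering $\mathcal{P}$ has at least $\lfloor(N+1)/2\rfloor$ lines --- is exactly the content of Pinchasi's theorem resolving a conjecture of Erd\H{o}s, Purdy, and Straus (there is a line $\ell$ determined by two points of $\mathcal{P}$ and $\lfloor(N-1)/2\rfloor$ further points at positive, pairwise distinct distances from $\ell$), which is what the paper invokes at this point. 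So either quote that result (as the paper does, after which both bounds follow at once for the direction normal to $\ell$), or be aware that your second case currently rests on a counting claim that is both unproved and, in this generality, a genuinely hard incidence-geometry theorem rather than a routine pigeonhole.
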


\begin{proof} We begin with the following combinatorial observation. Let $\mathcal{P}$ be a set of $m$ non-collinear points in the plane. Then there exists a line $\ell$, determined by at least two points in $\mathcal{P}$, such that if $\mathscr{L}$ is a set of parallel lines to $\ell$ and $\mathcal{P}\subseteq \mathscr{L}$, then $|\mathscr{L}|\geq\left\lfloor \frac{m+1}{2}\right\rfloor$. Indeed, according to \cite{MR2399380}, there are a line $\ell$, determined by at least two points of $\mathcal{P}$, and $\left\lfloor \frac{m-1}{2} \right\rfloor$ points in $\mathcal{P}$ whose distances from $\ell$ are positive and mutually distinct (see the first sentence in \cite[\S\ 2]{MR2399380}). Such points necessarily belong to mutually distinct lines from $\mathscr{L} \setminus \{\ell\}$, so $|\mathscr{L}| \geq\left\lfloor \frac{m-1}{2}\right\rfloor+1= \left\lfloor \frac{m+1}{2} \right\rfloor$.

Now let $\mathcal{P}\coloneqq \left\{(\xi_i,\eta_j)\colon (i,j) \in \mathcal{E}\right\}$ and note that $|\mathcal{P}|= |\mathcal{E}|$, since the $\xi_i$'s and the $\eta_j$'s are mutually distinct. Let $\ell$ be a line as in the observation above; then there are scalars $\alpha,\beta,\gamma\in\R$ such that $\ell =\{(\xi,\eta)\in\R^2\colon \alpha\xi + \beta\eta = \gamma\}$. 
Therefore 
\begin{equation*}
    \left\lfloor \frac{|\mathcal{E}|+1}{2}\right\rfloor \leq \big|\left\{\alpha\xi_i + \beta\eta_j\colon (i,j) \in \mathcal{E}\right\}\big| \leq |\mathcal{E}|-1,
\end{equation*}
the right-hand side inequality being true because two distinct points of $\mathcal{P}$ belong to $\ell$. The conclusion follows observing that $\mathrm{L}_{\alpha x+\beta y}=\left\{\alpha\xi_i + \beta\eta_j\colon (i,j) \in \mathcal{E}\right\}$.
\end{proof}

\begin{prop}\label{prop:nottoolargegaps}
Fix a non-empty finite set $A\subseteq \omega$ with $\min A\geq 2$ and fix $k \in \omega$ such that $k> 2\max A$. 
Then $\ell(A\cup \{k\})=\ell(A)$. 
\end{prop}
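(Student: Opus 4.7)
The inequality $\ell(A\cup\{k\})\geq \ell(A)$ is trivial, so the plan is to rule out $\ell(A\cup\{k\})>\ell(A)$. I would assume, towards a contradiction, the existence of a subspace $V\subseteq \bigcup_{n\in A\cup\{k\}}\mathrm{L}(n)\cup\{0\}$ of dimension $\ell(A)+1$, and then locate inside $V$ two vectors $x$ and $y$ with $|\mathrm{L}_x|\in A$ and $|\mathrm{L}_y|=k$. The former exists because $V$ cannot be contained in $\bigcup_{n\in A}\mathrm{L}(n)\cup\{0\}$ by definition of $\ell(A)$; the latter exists because $\ell(A)\geq 1$ (so $\dim V\geq 2$), while Theorem \ref{thm: interval not lin} (with $d=0$) says $\mathrm{L}(k)$ is not $2$-lineable, hence $V\not\subseteq \mathrm{L}(k)\cup\{0\}$. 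Writing $n\coloneqq |\mathrm{L}_x|$ and $M\coloneqq \max A$, we thus have $2\leq n\leq M$ and $k>2M\geq 2n$.

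Next, using Lemma \ref{lem: mod c0}, I would fix representations
\begin{equation*}
    x\sim_{c_0} \xi_1\bm{1}_{S_1}+\dots+\xi_n\bm{1}_{S_n} \quad\text{and}\quad y\sim_{c_0} \eta_1\bm{1}_{T_1}+\dots+\eta_k\bm{1}_{T_k}.
\end{equation*}
Since every $z\in\Span\{x,y\}\subseteq V$ has at most $k$ accumulation points, Lemma \ref{lem: supporti inscatolati} applies and produces a map $\iota\colon\{1,\dots,k\}\to\{1,\dots,n\}$ with $T_j\subseteq^* S_{\iota(j)}$ for every $j$. Consequently, defining $\mathcal{E}$ and $\mathcal{P}$ as in Lemma \ref{lem:improvedloweraccpoints}, we get $|\mathcal{E}|=k$ and $\mathcal{P}=\{(\xi_{\iota(j)},\eta_j)\colon j=1,\dots,k\}$.

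The key combinatorial step is to show that $\mathcal{P}$ is not collinear, so that Lemma \ref{lem:improvedloweraccpoints} becomes applicable. Since $\sum_{i=1}^n |\iota^{-1}(i)|=k>2M\geq 2n$, the pigeonhole principle gives an index $i_0$ with $|\iota^{-1}(i_0)|\geq 3$; moreover $\iota$ is surjective (every $S_i$ is infinite and partitioned modulo finite sets by the $T_j$'s inside it), so because $n\geq 2$ there is some $i_1\neq i_0$ in the image. This yields three points of $\mathcal{P}$ on the vertical line $\xi=\xi_{i_0}$ together with a fourth point having first coordinate $\xi_{i_1}\neq \xi_{i_0}$; no single line in $\mathbb{R}^2$ can accommodate all four.

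Finally, Lemma \ref{lem:improvedloweraccpoints} furnishes $z\in\Span\{x,y\}$ with
\begin{equation*}
    \left\lfloor \frac{k+1}{2}\right\rfloor \leq |\mathrm{L}_z|\leq k-1.
\end{equation*}
Because $k>2M$ forces $\lfloor (k+1)/2\rfloor\geq M+1$, we deduce $|\mathrm{L}_z|>M$, so $|\mathrm{L}_z|\notin A$; and $|\mathrm{L}_z|\leq k-1$ rules out $|\mathrm{L}_z|=k$. Yet $z$ is a non-zero element of $V$ (it has at least one accumulation point), contradicting $V\subseteq \bigcup_{n\in A\cup\{k\}}\mathrm{L}(n)\cup\{0\}$. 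The delicate step is the non-collinearity of $\mathcal{P}$: it is precisely the hypothesis $k>2M$ that upgrades pigeonhole to a column of size $3$ and thereby unlocks the Pinchasi-type estimate packaged in Lemma \ref{lem:improvedloweraccpoints}.
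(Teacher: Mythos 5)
Your proof is correct and follows essentially the same route as the paper's: the identical contradiction setup producing $x$ with $|\mathrm{L}_x|\in A$ and $y\in V\cap\mathrm{L}(k)$, Lemma \ref{lem: supporti inscatolati} to conclude $|\mathcal{E}|=k$, and Lemma \ref{lem:improvedloweraccpoints} combined with $k>2\max A$ to obtain the contradictory vector $z$. The only divergence is how non-collinearity of $\mathcal{P}$ is checked: you use pigeonhole plus surjectivity of $\iota$ (already two points in one column and one point with a different abscissa would do), while the paper simply notes that collinearity would make some $\alpha x+\beta y\in V$ convergent; accordingly, your closing remark slightly misplaces the role of $k>2\max A$, which is genuinely needed only for the final estimate $\left\lfloor (k+1)/2\right\rfloor>\max A$, and the words ``former''/``latter'' are swapped relative to the justifications you attach to them --- both points are harmless.
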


\begin{proof} Assume, towards a contradiction, that $\ell(A)< \ell(A\cup \{k\})$ and take a subspace $V$ of $\bigcup_{i \in A\cup \{k\}}\mathrm{L}(i)\cup \{0\}$ of dimension $\ell(A)+1$. By definition of $\ell(A)$, there exists a vector $y\in V \cap\mathrm{L}(k)$. Moreover, since $\ell(A)+1\geq 2$ and $\mathrm{L}(k)$ is not $2$-lineable by Theorem \ref{thm: interval not lin}, we can take a vector $x\in V$ such that $|\mathrm{L}_x|\in A$. Hence, letting $M\coloneqq \max A$, we have $|\mathrm{L}_x|\leq M$. Thanks to Lemma \ref{lem: mod c0}, we have the representations
\begin{equation*}
    x\sim_{c_0} \xi_1 \bm{1}_{S_1} + \dots+ \xi_n \bm{1}_{S_n} \quad \text{ and }\quad y\sim_{c_0} \eta_1 \bm{1}_{T_1} + \dots + \eta_{k} \bm{1}_{T_{k}}
\end{equation*}
(here $n\coloneqq |\mathrm{L}_x|\leq M$). 

Consider the sets $\mathcal{E}$ and $\mathcal{P}$ corresponding to $x$ and $y$ as in the statement of Lemma \ref{lem:improvedloweraccpoints}. If all points in $\mathcal{P}$ belong to the same line $\{(\xi,\eta)\in \R^2\colon \alpha\xi + \beta\eta=\gamma\}$, then the sequence $\alpha x + \beta y\in V$ would be convergent to $\gamma$, a contradiction. Hence the points of $\mathcal{P}$ are not collinear. Moreover, by our assumption on $V$, every linear combination of $x$ and $y$ has at most $k$ accumulation points. Thus, by Lemma \ref{lem: supporti inscatolati}, there exists a partition $\{I_1,\dots,I_n\}$ of $\{1,\dots, k\}$ such that $S_j=^*\bigcup_{i \in I_j}T_i$ for $j \in \{1,\dots,n\}$. This assures us that $|\mathcal{E}|=k$. Therefore, we can apply Lemma \ref{lem:improvedloweraccpoints} and we obtain the existence of a vector $z\in\Span\{x,y\}$ such that 
\begin{equation*}
    \left\lfloor \frac{k+1}{2}\right\rfloor \leq |\mathrm{L}_z| \leq k-1.
\end{equation*}
However, the assumption $k> 2\max A$ implies $ \left\lfloor \frac{k+1}{2}\right\rfloor> \max A$, so $\mathrm{L}_z\notin A\cup\{k\}$, a contradiction with the fact that $\mathrm{span}\{x,y\} \subseteq V \subseteq \bigcup_{i \in A\cup \{k\}}\mathrm{L}(i)\cup \{0\}$.
\end{proof}

\begin{rmk} The above proof shows that, if $A$ and $k$ are as in the statement of Proposition \ref{prop:nottoolargegaps}, then every vector space contained in $\bigcup_{n \in A\cup \{k\}} \mathrm{L}(n)\cup \{0\}$ and of dimension at least $2$ does not intersect $\mathrm{L}(k)$. This is not true anymore if $A$ and $k$ don't satisfy the condition of the proposition, as the following example shows.
\end{rmk}

\begin{example}\label{rmk:strangeoctagon}
For each integer $n\geq 2$, set $A_n\coloneqq \{n,n+1,2n\}$ and take vectors $x,y \in \mathrm{L}(2n)$ with
\begin{equation*}
    x \sim_{c_0} \xi_1 \bm{1}_{S_1} +\dots+\xi_{2n} \bm{1}_{S_{2n}} \quad \text{ and }\quad y \sim_{c_0} \eta_1 \bm{1}_{S_1}+\dots +\eta_{2n} \bm{1}_{S_{2n}},
\end{equation*}
where $\{S_1,\dots, S_{2n}\}$ is a partition of $\omega$ into infinite sets. Further, the two families of distinct scalars $\{\xi_1,\dots, \xi_{2n}\}$ and $\{\eta_1,\dots, \eta_{2n}\}$ are chosen so that, if $P_j \coloneqq(\xi_j,\eta_j)$, then $\mathcal{P}=\{P_1, \dots, P_{2n}\}$ are the vertices of a regular polygon with $2n$ edges labelled in the clockwise order. Then $V\coloneqq \Span\{x,y\}$ is a $2$-dimensional vector space such that $V\subseteq \bigcup_{k \in A_n}\mathrm{L}(k)\cup \{0\}$ and $V\cap \mathrm{L}(k) \neq \emptyset$ for each $k \in A_n$. Indeed, if $\mathscr{L}$ is a set of parallel lines such that $\mathcal{P}\subseteq \mathscr{L}$ and every line in $\mathscr{L}$ contains a point of $\mathcal{P}$, then there are three cases. If every line in $\mathscr{L}$ only contains one point of $\mathcal{P}$, then $|\mathscr{L}|=2n$; if one line in $\mathscr{L}$ contains $P_1$ and $P_2$, then $|\mathscr{L}|=n$; finally, if one line in $\mathscr{L}$ contains $P_1$ and $P_3$, then $|\mathscr{L}|=n+1$. We omit the elementary geometric considerations required to prove that there only are these three cases (and we advise the reader to draw a picture).
\end{example}

We now give two examples of consequences of the above result. The first one implies in particular that, if $A$ is not an interval, Corollary \ref{cor: lims in A not lin} might not be sharp:
\begin{cor}\label{cor:LnLnplusdnot2lineable}
Let $n,k \in \omega$ be such that $n\geq 2$ and $k>2n$. Then $\mathrm{L}(n)\cup \mathrm{L}(k)$ is not $2$-lineable.
\end{cor}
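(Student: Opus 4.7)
The plan is to derive the corollary as an immediate consequence of Proposition \ref{prop:nottoolargegaps} applied to the singleton $A \coloneqq \{n\}$. First I would verify the hypotheses: since $n \geq 2$ we have $\min A = n \geq 2$, and the assumption $k > 2n$ translates into $k > 2 \max A$. Therefore the proposition applies and yields $\ell(\{n\} \cup \{k\}) = \ell(\{n\})$.

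Next I would identify $\ell(\{n\})$ by invoking Theorem \ref{thm: interval not lin} with $d = 0$: that theorem asserts that $\mathrm{L}(n)$ is $1$-lineable but not $2$-lineable, so $\ell(\{n\}) = 1$. Combining the two displayed equalities gives $\ell(\{n,k\}) = 1$, which by the very definition of $\ell$ means that $\mathrm{L}(n) \cup \mathrm{L}(k)$ is not $2$-lineable, as required. There is really no obstacle here beyond matching the hypotheses, since both ingredients have already been established earlier in the paper; the short argument is essentially just an unfolding of definitions.
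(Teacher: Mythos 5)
Your proposal is correct and is essentially identical to the paper's own argument: the paper likewise obtains $\ell(\{n\})=1$ from Theorem \ref{thm: interval not lin} (the case $d=0$) and then applies Proposition \ref{prop:nottoolargegaps} with $A=\{n\}$ and $k>2\max A$ to conclude $\ell(\{n,k\})=1$. No gaps; the hypothesis-matching you describe is exactly what is needed.
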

\begin{proof} We have $\ell(\{n\})=1$ by Theorem \ref{thm: interval not lin}. Since $k>2n$, we conclude by Proposition \ref{prop:nottoolargegaps} that $\ell(\{n,k\})=\ell(\{n\})=1$.
\end{proof}

By iteration of the above argument, we readily obtain the following result. It implies in particular that $\bigcup_{2\leq n<\omega} \mathrm{L}(n!)$ and $\bigcup_{1\leq n<\omega} \mathrm{L}(3^n)$ are not $2$-lineable.

\begin{cor}\label{cor:infinitesetnot2lineable}
Let $(a_n)_{n \in \omega}$ be an increasing sequence in $\omega$ such that $a_0\geq 2$ and $a_{n+1}>2a_{n}$ for all $n\in \omega$. Then $\bigcup_{n \in \omega}\mathrm{L}(a_n)$ is not $2$-lineable.
\end{cor}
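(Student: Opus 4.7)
My plan is a straightforward iterative application of Proposition \ref{prop:nottoolargegaps}, once the problem is reduced to a finite subset of $\{a_n\}_{n \in \omega}$. Assume, towards a contradiction, that $\bigcup_{n \in \omega} \mathrm{L}(a_n)$ is $2$-lineable; then there exist linearly independent vectors $x,y \in \ell_\infty$ whose linear span $V$ is entirely contained in $\bigcup_{n \in \omega} \mathrm{L}(a_n) \cup \{0\}$.

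The first step is to confine $V$ to finitely many of the sets $\mathrm{L}(a_n)$. Say $|\mathrm{L}_x| = a_i$ and $|\mathrm{L}_y| = a_j$. By Lemma \ref{lem:boundonnumberaccpoints}, every non-zero $z \in V$ satisfies $|\mathrm{L}_z| \leq a_i a_j$. Consequently, if I set
\begin{equation*}
    A \coloneqq \{a_m \colon m \in \omega,\ a_m \leq a_i a_j\} = \{a_{n_1}, a_{n_2}, \dots, a_{n_r}\},
\end{equation*}
with $a_{n_1} < a_{n_2} < \dots < a_{n_r}$, then $A$ is a non-empty finite subset of $\omega$ with $\min A \geq 2$ and $V \subseteq \bigcup_{n \in A} \mathrm{L}(n) \cup \{0\}$. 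In particular, $\bigcup_{n \in A} \mathrm{L}(n)$ is $2$-lineable, that is, $\ell(A) \geq 2$.

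The second step is to contradict this via Proposition \ref{prop:nottoolargegaps}. The hypothesis $a_{m+1} > 2 a_m$ for every $m \in \omega$, together with $n_r > n_{r-1}$, yields
\begin{equation*}
    a_{n_r} \geq a_{n_{r-1}+1} > 2 a_{n_{r-1}} = 2 \max\{a_{n_1}, \dots, a_{n_{r-1}}\}.
\end{equation*}
Hence Proposition \ref{prop:nottoolargegaps} applies with the set $\{a_{n_1}, \dots, a_{n_{r-1}}\}$ and $k = a_{n_r}$, giving $\ell(A) = \ell(A \setminus \{a_{n_r}\})$. Iterating the same reasoning $r-1$ times (each time the gap condition is inherited from $(a_n)$), I reduce down to a singleton and obtain
\begin{equation*}
    \ell(A) = \ell(A \setminus \{a_{n_r}\}) = \dots = \ell(\{a_{n_1}\}) = 1,
\end{equation*}
where the last equality is Theorem \ref{thm: interval not lin} (applied to the interval $\{a_{n_1}\}$, noting that $a_{n_1} \geq a_0 \geq 2$). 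This contradicts $\ell(A) \geq 2$, completing the argument.

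I don't expect any real obstacle here; the only point requiring care is verifying that the gap condition is preserved when passing from the whole sequence $(a_n)$ to the subset $A$, which is immediate because $A$ inherits the $a_{m+1} > 2 a_m$ spacing from its parent sequence.
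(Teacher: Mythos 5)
Your proposal is correct and follows essentially the same route as the paper: Lemma \ref{lem:boundonnumberaccpoints} confines a putative $2$-dimensional subspace to $\bigcup_{n\in A}\mathrm{L}(n)\cup\{0\}$ for a finite initial segment $A$ of the sequence, and then iterated use of Proposition \ref{prop:nottoolargegaps} (with the base case $\ell(\{a_{n_1}\})=1$ from Theorem \ref{thm: interval not lin}) yields the contradiction. The only difference is cosmetic: the paper first establishes $\ell(\{a_0,\dots,a_N\})=1$ for all $N$ by building up, whereas you peel off the largest element after assuming the contradiction, which is the same induction run in the opposite order.
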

\begin{proof} Applying inductively Proposition \ref{prop:nottoolargegaps} to $\{a_0,\dots, a_n\}$ and $a_{n+1}$ we obtain that $\bigcup_{k\leq N} \mathrm{L}(a_k)$ is not $2$-lineable for every $N\in\omega$. If there exists a $2$-dimensional vector space $V\subseteq \bigcup_{k \in \omega}\mathrm{L}(a_k)\cup\{0\}$, then, by Lemma \ref{lem:boundonnumberaccpoints}, $V\subseteq \bigcup_{k\leq N}\mathrm{L}(a_k)\cup\{0\}$ for some $N$, a contradiction.
\end{proof}

These type of results and \eqref{eq: l(A) larger than interval} might lead one to conjecture that $\ell(A)$ could be large only if $A$ contains large intervals. The last result of the section gives a strong negative answer to this conjecture. In particular, it follows that the inequality \eqref{eq: l(A) larger than interval} is not sharp, even if $A$ is finite.

\begin{thm}\label{th: odd c lineable} $\bigcup_{1\leq n<\omega}\mathrm{L}(2n+1)$ is $\mathfrak{c}$-lineable.
\end{thm}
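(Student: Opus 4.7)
The plan is to exhibit $\mathfrak{c}$ linearly independent vectors $\{e_\alpha\}_{\alpha\in(0,1)}$ in $\ell_\infty$, each belonging to $\mathrm{L}(3)$, such that every non-trivial finite linear combination has an odd number of accumulation points (and at least three). The key ingredient will be a ternary strengthening of the independent family used at the end of the proof of Theorem \ref{thm:finitepart}: a system $\{(A_\alpha,B_\alpha)\}_{\alpha\in(0,1)}$ of pairs of disjoint subsets of $\omega$ with the property that, for every distinct $\alpha_1,\dots,\alpha_n\in(0,1)$ and every $\sigma\in\{-1,0,1\}^n$, the intersection $\bigcap_{k=1}^n X_{\alpha_k}^{\sigma_k}$ is infinite, where $X_\alpha^{1}:=A_\alpha$, $X_\alpha^{-1}:=B_\alpha$, and $X_\alpha^{0}:=\omega\setminus(A_\alpha\cup B_\alpha)$. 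Such a family is produced by a standard enumeration trick: identify $\omega$ with the countable set of pairs $(F,g)$, where $F\subseteq(0,1)$ is finite and $g\colon F\to\{-1,0,1\}$, and declare $(F,g)\in A_\alpha$ (resp.~$(F,g)\in B_\alpha$) iff $\alpha\in F$ and $g(\alpha)=1$ (resp.~$g(\alpha)=-1$). Any prescribed sign pattern on $\{\alpha_1,\dots,\alpha_n\}$ is then witnessed by infinitely many pairs $(F,g)$ with $F\supseteq\{\alpha_1,\dots,\alpha_n\}$ and $g$ chosen appropriately.

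Define $e_\alpha:=\bm{1}_{A_\alpha}-\bm{1}_{B_\alpha}$, so that $e_\alpha\in\mathrm{L}(3)$ with $\mathrm{L}_{e_\alpha}=\{-1,0,1\}$. Given distinct $\alpha_1,\dots,\alpha_n\in(0,1)$ and scalars $d_1,\dots,d_n\in\R$, the sequence $y:=\sum_{k=1}^n d_k e_{\alpha_k}$ takes at the index $m$ the value $\sum_{k=1}^n d_k\sigma_k(m)$, where $\sigma_k(m)\in\{-1,0,1\}$ records whether $m$ lies in $A_{\alpha_k}$, $B_{\alpha_k}$, or the complement of their union. Ternary independence guarantees that every pattern $\sigma\in\{-1,0,1\}^n$ is realised on an infinite set of indices, while $y$ clearly attains only values of this form; hence the set of values of $y$ coincides with its set of accumulation points, and both equal
\[
V:=\left\{\textstyle\sum_{k=1}^n d_k\sigma_k \colon \sigma\in\{-1,0,1\}^n\right\}.
\]

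A short symmetry argument now finishes the proof. Since $\sigma\mapsto-\sigma$ is a bijection of $\{-1,0,1\}^n$, we have $V=-V$, and $0\in V$ is the unique fixed point of the involution $v\mapsto -v$ acting on $V$; thus $|V|=|\mathrm{L}_y|$ is odd. Moreover, if $(d_1,\dots,d_n)\neq 0$, then taking $\sigma_k:=\operatorname{sgn}(d_k)$ produces the value $\sum_{k=1}^n |d_k|>0$, so $\{-\sum_k|d_k|,\,0,\,\sum_k|d_k|\}\subseteq V$ and $|V|\geq 3$. This simultaneously shows that the family $\{e_\alpha\}_{\alpha\in(0,1)}$ is linearly independent (a non-trivial linear dependence would force $V=\{0\}$) and that $\Span\{e_\alpha\colon \alpha\in(0,1)\}$ is contained in $\bigcup_{1\leq n<\omega}\mathrm{L}(2n+1)\cup\{0\}$, proving the $\mathfrak{c}$-lineability. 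The only mildly delicate point is the construction of the ternary independent family, but this is a routine variant of the classical independent-family construction, so no serious obstacle arises.
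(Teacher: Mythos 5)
Your main argument is exactly the paper's: the paper likewise takes a family of three-set partitions $\{A_\gamma^{-1},A_\gamma^{0},A_\gamma^{1}\}$ of $\omega$ indexed by $\gamma\in\mathfrak{c}$ with all finite pattern intersections infinite, sets $x_\gamma=\bm{1}_{A_\gamma^{1}}-\bm{1}_{A_\gamma^{-1}}$, identifies the accumulation points of a combination $z=\sum_k d_k x_{\gamma_k}$ with the set $\{\sum_k d_k\sigma_k\colon\sigma\in\{-1,0,1\}^k\}$, and gets oddness from the symmetry $\mathrm{L}_z=-\mathrm{L}_z$ together with $0\in\mathrm{L}_z$ and $|\mathrm{L}_z|\geq 3$. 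That part of your write-up (including the linear-independence observation) is correct and matches the paper.

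The gap is in your construction of the ternary independent family. You propose to identify $\omega$ with ``the countable set of pairs $(F,g)$ with $F\subseteq(0,1)$ finite and $g\colon F\to\{-1,0,1\}$'', but this set is not countable: already the singletons $F=\{\alpha\}$, $\alpha\in(0,1)$, give continuum many pairs. So there is no such identification, and the sets $A_\alpha,B_\alpha$ are not defined as subsets of $\omega$; the step you call routine is precisely the one that fails as written. The existence of the family is indeed a standard variant of the independent-family construction, but it needs a correct countable index set. The paper uses $\Q[x]$ and, for $\gamma\in\R$, the partition $A_\gamma^{-1}=\{p\colon p(\gamma)\le -1\}$, $A_\gamma^{0}=\{p\colon |p(\gamma)|<1\}$, $A_\gamma^{1}=\{p\colon p(\gamma)\ge 1\}$; interpolation plus density of rational polynomials yields infinitely many $p$ realising any prescribed pattern at finitely many distinct $\gamma$'s. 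Alternatively, a Fichtenholz--Kantorovich-style repair of your idea works: index by pairs $(\mathcal{F},g)$ where $\mathcal{F}$ is a finite collection of pairwise disjoint open intervals with rational endpoints and $g\colon\mathcal{F}\to\{-1,0,1\}$, and declare such a pair to lie in $A_\alpha$ (resp.\ $B_\alpha$) iff $\alpha$ belongs to some $I\in\mathcal{F}$ with $g(I)=1$ (resp.\ $g(I)=-1$). With that correction your proof coincides with the paper's.
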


\begin{proof} Let $\mathscr{A}\coloneqq \{A_{\gamma}^{e} \colon e\in\{-1,0,1\}, \gamma\in\mathfrak{c}\} \subseteq \mathcal{P} (\omega)$ be such that $\{A_{\gamma}^{-1}, A_\gamma^{0}, A_\gamma^{1}\}$ is a partition of $\omega$ for each $\gamma\in\mathfrak{c}$ and
\begin{equation*}
    A_{\gamma_1}^{e_1} \cap \dots \cap A_{\gamma_k}^{e_k} \text{ is an infinite set}
\end{equation*}
for all $k\geq 1$, all distinct $\gamma_1,\dots,\gamma_k\in \mathfrak{c}$, and all $e=(e_j)_{j=1}^{k} \in \{-1,0,1\}^k$.
Let us observe that the existence of such a family $\mathscr{A}$ follows similarly as the existence of independent families (see, \emph{e.g.}, \cite[Example 2, p. 10]{MR3698923}). Indeed, the set $\Q[x]$ of polynomials with rational coefficients is countable, hence we can construct $\mathscr{A}$ as a subset of $\mathcal{P}(\Q[x])$. Therefore, it is sufficient to define, for each $\gamma \in \R$, $A_{\gamma}^{-1} \coloneqq \{p \in P \colon p(\gamma)\leq -1\}$, $A_{\gamma}^{0}\coloneqq \{p \in P\colon |p(\gamma)|<1\}$, and $A_{\gamma}^{1}\coloneqq \{p \in P\colon p(\gamma)\geq 1\}$.

At this point, for each $\gamma\in \mathfrak{c}$, define the vector 
\begin{equation}\label{eq: odddefinition}
  x_\gamma\coloneqq \bm{1}_{A_{\gamma}^{1}}-  \bm{1}_{A_{\gamma}^{-1}}
\end{equation}
and set $V\coloneqq \Span\{x_\gamma\colon \gamma\in \mathfrak{c}\}$. We claim that each non-zero $z \in V$ is a non-convergent sequence with an odd number of accumulation points. To this aim, suppose that $z=\sum_{j=1}^k \alpha_j x_{\gamma_j}$ for some non-zero $\alpha_1,\dots,\alpha_k \in \mathbb{R}$ and some distinct $\gamma_1,\dots,\gamma_k\in \mathfrak{c}$.

Note that
\begin{equation*}
    \mathrm{L}_z =\left\{\sum_{j=1}^k \alpha_j e_j\colon e=(e_j)_{j=1}^k \in \{-1,0,1\}^k \right\}.
\end{equation*}
Since $-e\in \{-1,0,1\}^k$ whenever $e\in \{-1,0,1\}^k$, we obtain that $\mathrm{L}_z= -\mathrm{L}_z$. Moreover, setting $e=(e_1,0, \dots, 0)$ with $e_1 \in \{-1,0,1\}$, we get that $\alpha_1\cdot \{-1,0,1\} \subseteq \mathrm{L}_z$. Hence $|\mathrm{L}_z|\geq 3$ and $0 \in \mathrm{L}_z$. Combining this with $\mathrm{L}_z= -\mathrm{L}_z$, the result follows.
\end{proof}

\begin{rmk}\label{rmk:limitones} The same argument as above, paired with Lemma \ref{lem:boundonnumberaccpoints}, proves that $\Span\{x_0,x_1\}$ is a $2$-dimensional vector space contained in $\bigcup_{n \in A}\mathrm{L}(n) \cup \{0\}$, where $A\coloneqq \{3,5,7,9\}$ and the vectors $x_i$ are defined as in \eqref{eq: odddefinition}. Therefore $\ell(\{3,5,7,9\}) \geq 2$.
\end{rmk}

\section{Final remarks}\label{sec: problems}
In this last section we collect some observations concerning possible improvements of the results presented in our paper. Let us start with one comment concerning maximal lineability. A subset $M$ of a vector space $X$ is \emph{maximal lineable} if $M\cup\{0\}$ contains a linear subspace $V$ such that $\dim(V)=\dim(X)$. Clearly, every dense subspace $V$ of $\ell_\infty$ satisfies $\dim(V)=\mathfrak{c}$, merely because $\dim(\ell_\infty) =\dens(\ell_\infty) =\mathfrak{c}$. Consequently, all our results concerning dense lineability in $\ell_\infty$ automatically are `maximal dense lineability' results (note that the situation is different if the Banach space $X$ is separable, since maximal lineability would require finding a subspace of dimension continuum, while a dense subspace might have countable dimension). 

\subsection{Ideal convergence}
Next, we discuss extensions of our results to the setting of ideal convergence. Recall that an ideal $\mathcal{I}$ on $\omega$ is a proper subfamily of $\mathcal{P}(\omega)$ which is closed under subsets and finite unions and that contains all singletons of $\omega$. We denote by $\mathrm{Fin}$ the ideal of finite sets; hence $\mathrm{Fin}\subseteq \mathcal{I}$ for every ideal $\mathcal{I}$. For each sequence $x \in \ell_\infty$, let $\Gamma_x(\mathcal{I})$ be the set of its $\mathcal{I}$-cluster points, \emph{i.e.}, the set of all $\eta \in \mathbb{R}$ such that $\{n \in \omega\colon |x(n)-\eta|<\e\}\notin \mathcal{I}$ for all $\e>0$. It is easy to see each $\Gamma_x(\mathcal{I})$ is non-empty, closed, and contained in $\mathrm{L}_x=\Gamma_x(\mathrm{Fin})$. 
Given a cardinal $\kappa$, define the set
\begin{equation*}
    \Gamma(\mathcal{I}, \kappa)\coloneqq \left\{x \in \ell_\infty\colon |\Gamma_x(\mathcal{I})|= \kappa\right\}.
\end{equation*}
Hence $\Gamma(\mathrm{Fin},\kappa)=\mathrm{L}(\kappa)$ for all $\kappa$ and $\Gamma(\mathcal{I},\kappa)= \emptyset$ for uncountable $\kappa<\mathfrak{c}$. 

Let $\mathcal{I}$ be an ideal on $\omega$ such that there are disjoint subsets $(B_j)_{j\in\omega}$ of $\omega$ with $B_j\notin \mathcal{I}$ for every $j\in\omega$. We stress here that this condition is satisfied by a large class of ideals. Besides the case $\mathcal{I}= \mathrm{Fin}$, it holds for all meagre ideals, as it readily follows from a classical characterisation of meagre filters due to Talagrand \cite[Th\'eor\`eme 21]{Talagrand}, see also \cite[Theorem 4.1.2]{MR1350295}. Moreover, this condition is satisfied by all ideals which do not contain any isomorphic copy of a maximal ideal. Then, minimal variations in the proofs of Theorem \ref{th: L omega dense lin}, Remark \ref{rmk: Lc dense lin}, and Theorem \ref{thm:Lomegaspaceable} give that both $\Gamma(\mathcal{I},\omega)$ and $\Gamma (\mathcal{I}, \mathfrak{c})$ are densely lineable in $\ell_\infty$ and spaceable. We chose to state our main results only for $\mathcal{I}=\mathrm{Fin}$ for the sake of clarity of the exposition, but we now quickly discuss how to prove the more general case.

The spaceability results are obtained from Theorem \ref{thm:Lomegaspaceable} by using a partition $(A_{j,k})_{j,k\in\omega}$ such that $A_{j,k}\notin \mathcal{I}$. The dense lineability of $\Gamma(\mathcal{I},\omega)$ in $\ell_\infty$ follows from the very same argument as in Theorem \ref{th: L omega dense lin}, using again a partition $(B_j)_{j\in\omega}$ such that $B_j\notin\mathcal{I}$. Note that $X\coloneqq \bigcup_{\kappa\leq \omega} \Gamma(\mathcal{I},\kappa)$ and $Y\coloneqq \bigcup_{\kappa< \omega} \Gamma (\mathcal{I},\kappa)$ are still vector spaces, due to the standard fact that 
\begin{equation}\label{eq: I cluset of sum}
    \Gamma_{x+y}(\mathcal{I})\subseteq \Gamma_x(\mathcal{I})+\Gamma_y(\mathcal{I})
\end{equation}
(which readily follows, \emph{e.g.}, from \cite[Chapter I, \S\ 7, no.\ 3, Proposition 8]{Bourbaki}). For the dense lineability of $\Gamma(\mathcal{I},\mathfrak{c})$ in $\ell_\infty$, we need sequences $r_j\colon \omega \to(0,1)$ with $\suppt(r_j)=B_j$ and $\Gamma_{r_j} (\mathcal{I})=[0,1]$. For this, take disjoint sets $(A_{j,k})_{j,k\in\omega}$ with $A_{j,k}\notin \mathcal{I}$ and define $B_j\coloneqq \cup_{k\in\omega} A_{j,k}$. Then, let $(q_k)_{k\in\omega}$ be an enumeration of $\Q\cap(0,1)$ and define $r_j$ to be equal to $q_k$ on $A_k$ ($k\in\omega$) and $0$ elsewhere. The same argument as in Remark \ref{rmk: Lc dense lin}, using again \eqref{eq: I cluset of sum}, gives the result.

Finally, we can also modify the proof of Theorem \ref{thm:finitepart} to prove that $\bigcup_{2\leq n< \omega} \Gamma (\mathcal{I},n)$ is densely lineable in $\ell_\infty$, for the same class of ideals. Indeed, let $(B_j)_j\in\omega$ be a partition of $\omega$ as above and let $\mathscr{I}$ be an independent family on $\omega$ of cardinality $\mathfrak{c}$. For $A\in\mathscr{I}$ define $B_A\coloneqq \bigcup_{j\in A} B_j$ and let $\mathscr{J}\coloneqq\{B_A\colon A\in\mathscr{I}\}$. Then $V\coloneqq \Span\{\bm{1}_B\colon B\in \mathscr{J}\}$ is a vector space of dimension $\mathfrak{c}$, every vector in $V$ has finitely many $\mathcal{I}$-cluster points, and $V\cap \Gamma(\mathcal{I},1)=\{0\}$. To prove the last assertion, let $D_0,\dots, D_N\in\mathscr{J}$ and non-zero scalars $d_0,\dots,d_N\in\R$. By definition, the sets
\begin{equation*}
    D_0 \setminus \left(D_1 \cup\dots\cup D_N\right) \quad \text{and} \quad \omega \setminus \left(D_0 \cup\dots\cup D_N\right)
\end{equation*}
do not belong to $\mathcal{I}$. Hence $\sum_{j=0}^N d_j \bm{1}_{D_j}$ attains the values $d_0$ and $0$ on sets that do not belong to $\mathcal{I}$, thus it is not $\mathcal{I}$-convergent.

\subsection{\texorpdfstring{$\R^\omega$}{Rω} and pointwise convergence}
Although all the paper remained in the realm of Banach spaces, we only used little Banach space theoretic structure of $\ell_\infty$. Therefore, it is natural to ask whether similar results can be true if we replace $\ell_\infty$ with the larger space $\R^\omega$ of all scalar sequences. For a sequence $(x(n))_{n\in\omega} \in\R^\omega$, the set $\mathrm{L}_x$ of accumulation points of $x$ is now defined as a subset of $\R\cup \{\pm\infty\}$ (if a subsequence of $(x(n))_{n\in\omega}$ diverges to $\pm\infty$, $\pm\infty$ is considered to be an accumulation point of the sequence). The definition of $\mathrm{L}(\kappa)$ is also modified accordingly; for example, every sequence $(x(n))_{n\in\omega}$ that diverges to $\infty$ belongs to $\mathrm{L}(1)$.

$\R^\omega$ is a separable, completely metrisable topological vector space when endowed with the pointwise topology. (Throughout all the subsection, we always endow $\R^\omega$ with the pointwise topology.) In addition, $c_{00}$ (and, hence, $\ell_\infty$) is dense in $\R^\omega$. Therefore, our results immediately imply that $\mathrm{L}(\mathfrak{c})$, $\mathrm{L}(\omega)$, and $\bigcup_{2 \leq n<\omega}\mathrm{L}(n)$ are densely lineable in $\R^\omega$. Note, on the other hand that the results for $\ell_\infty$ are stronger, since the norm topology is substantially finer than the pointwise one; in particular, there is no obvious way to recover the results for $\ell_\infty$ from the corresponding one for $\R^\omega$.

As regards spaceability, the same argument as in Theorem \ref{thm:Lomegaspaceable} shows that $\mathrm{L} (\mathfrak{c})$ is spaceable in $\R^\omega$. Indeed, if $Y\coloneqq\closedSpan\{e_n\} _{n\in\omega}$, where $e_n$ is as in \eqref{eq: en for spaceable} and the closure is in the pointwise topology, then
\begin{equation*}
    Y=\left\{\sum_{n=0}^\infty \sum_{k=0}^\infty \alpha_n a_k\cdot \bm{1}_{A_{n,k}}\colon (\alpha_n)_{n\in \omega} \in\R^\omega \right\}
\end{equation*}
(here, the above series converge in the pointwise topology). Hence, if $x\in Y\setminus\{0\}$, write $x\coloneqq \sum_{n=0}^\infty \sum_{k=0}^\infty \alpha_n a_k\cdot \bm{1}_{A_{n,k}}$ and take $n\in\omega$ with $\alpha_n\neq 0$; thus $\alpha_n\cdot [0,1]\subseteq \mathrm{L}_x$.

On the other hand, the above argument does not extend to prove that $\mathrm{L}(\omega)$ is spaceable in $\mathbb{R}^\omega$ (because the sequence $(\alpha_n)_{n\in \omega} \in \R^\omega$ can create uncountably many accumulation points). Interestingly, it turns out that, differently from Theorem \ref{thm:Lomegaspaceable}, $\mathrm{L}(\omega)$ is not spaceable in $\mathbb{R}^\omega$. In the next theorem we actually prove a more general result.

\begin{thm} For every closed infinite-dimensional subspace $Y$ of $\R^\omega$ there is $x\in Y$ such that $\mathrm{L}_x= \R\cup\{\pm \infty\}$. In particular, $\bigcup_{\kappa\leq \omega} \mathrm{L}(\kappa)$ is not spaceable in $\R^\omega$.
\end{thm}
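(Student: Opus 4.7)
The plan is to exploit that in $\R^\omega$ with the pointwise topology, each coordinate evaluation is a continuous linear functional; hence in any closed infinite-dimensional subspace $Y$ the subspace killed by finitely many coordinates still has finite codimension, and so remains infinite-dimensional. This gives enough room to build a staircase-type sequence in $Y$ along which formal sums converge automatically.

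First I would construct recursively a sequence $(y_k)_{k\geq 1}\subseteq Y$ and strictly increasing indices $(n_k)_{k\geq 1}\subseteq\omega$ such that $y_k(i)=0$ for all $i<n_k$ and $y_k(n_k)=1$. Given $n_1<\dots<n_{k-1}$ and $y_1,\dots,y_{k-1}$, the subspace $Z_k\coloneqq\{y\in Y : y(i)=0 \text{ for all } i\leq n_{k-1}\}$ is the kernel of a continuous linear map $Y\to\R^{n_{k-1}+1}$, so it has finite codimension in $Y$; being infinite-dimensional, $Z_k$ contains a nonzero $y_k$. Let $n_k\coloneqq\min\{i:y_k(i)\neq 0\}$ (necessarily $>n_{k-1}$) and rescale so that $y_k(n_k)=1$.

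For \emph{any} scalars $(c_k)_{k\geq 1}$, the series $\sum_{k\geq 1}c_k y_k$ converges in $\R^\omega$, because at each coordinate $i$ only the finitely many $k$ with $n_k\leq i$ contribute; as $Y$ is closed, the limit $x$ lies in $Y$. Evaluating at $n_k$, and using $y_j(n_k)=0$ for $j>k$ (since $n_j>n_k$) together with $y_k(n_k)=1$, yields the triangular identity $x(n_k)=c_k+\sum_{j<k}c_j y_j(n_k)$. Consequently, for any prescribed sequence $(t_k)$ of real numbers, recursively defining $c_k\coloneqq t_k-\sum_{j<k}c_j y_j(n_k)$ produces an $x\in Y$ with $x(n_k)=t_k$ for every $k$.

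Choosing $(t_k)$ to be any sequence in $\R$ whose range is dense in $\R$ and which admits subsequences tending to $+\infty$ and to $-\infty$ (for example, interleave an enumeration of $\Q$ with the integers $\pm k$), every $\eta\in\R\cup\{\pm\infty\}$ is then a subsequential limit of $(x(n_k))_k$, so $\mathrm{L}_x=\R\cup\{\pm\infty\}$. The "In particular" clause follows at once: since $|\mathrm{L}_x|=\mathfrak{c}$, one has $x\neq 0$ and $x\notin\bigcup_{n\leq\omega}\mathrm{L}(n)$, hence $Y\not\subseteq \bigcup_{n\leq\omega}\mathrm{L}(n)\cup\{0\}$. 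The only substantive obstacle is the staircase construction in the first step; once it is in hand, the rest reduces to a finite-sum evaluation at each coordinate together with a density argument.
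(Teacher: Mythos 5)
Your proof is correct and follows essentially the same route as the paper: both arguments pick vectors in $Y$ with strictly increasing first nonzero coordinates (using that coordinate functionals are continuous, so vanishing on finitely many coordinates cuts out only finite codimension), form a pointwise-convergent series whose limit stays in the closed subspace $Y$, and solve the resulting triangular system to prescribe the values along those coordinates — the paper takes an enumeration of $\Q$ as targets, while you interleave a dense sequence with $\pm k$, which is only a cosmetic difference.
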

Let us remark that, aside implying the non spaceability of $\mathrm{L}(\omega)$, the result implies that also $\bigcup_{2 \leq n<\omega}\mathrm{L}(n)$ is not spaceable in $\R^\omega$.

\begin{proof} Notice that $\{x\in Y\colon n\leq \min (\suppt(x))\}$ has finite codimension in $Y$ for every $n\in\omega$. Hence, by the fact that $Y$ is infinite-dimensional, we can find a sequence $(x_n)_{n\in\omega}$ of non-zero vectors of $Y$ such that the sequence $s_n\coloneqq \min (\suppt(x_n))$ is strictly increasing. Then, let $(q_n)_{n\in\omega}$ be an enumeration of $\Q$. Take $(\alpha(n))_{n\in\omega} \in\R^\omega$ that solves the following system of equations:
\begin{equation*}
    \sum_{j=0}^k \alpha(j)x_j(s_k)=q_k \qquad (k\in\omega).
\end{equation*}
Such a system can indeed be solved recursively, using the fact that $x_k(s_k)\neq 0$ for every $k\in\omega$. We are now in position to define the vectors
\begin{equation*}
    u_k\coloneqq \sum_{j=0}^k \alpha(j)x_j\in Y \quad \text{ and }\quad u\coloneqq \sum_{j=0}^\infty \alpha(j)x_j.
\end{equation*}
Note that the series defining $u$ converges pointwise, due to the assumption that $(s_n)_{n\in\omega}$ is strictly increasing. By the same reason, we also conclude that $u_k\to u$ pointwise, hence $u\in Y$. However, $u(s_k)=q_k$ for every $k\in\omega$, hence $\mathrm{L}_u= \R\cup\{\pm \infty\}$.
\end{proof}

\subsection{Further research}
In conclusion of our presentation, we shall highlight some directions for possible further research that seem natural in light of the results presented. Concerning spaceability, recall that the two closed subspaces that we constructed in Theorem \ref{thm:Lomegaspaceable}, contained in $\mathrm{L}(\omega) \cup\{0\}$ and $\mathrm{L} (\mathfrak{c}) \cup\{0\}$, are isometric to $c_0$ and $\ell_\infty$ respectively. It would be interesting to know whether it is possible to build a non-separable closed subspace also in the case of $\mathrm{L}(\omega)$.

\begin{probl}\label{prob: non separable} Does $\mathrm{L}(\omega)\cup\{0\}$ contain a closed non-separable subspace? Does it contain an isometric copy of $\ell_\infty$? The same questions could be asked for $\bigcup_{\kappa \leq \omega}\mathrm{L}(\kappa)$.
\end{probl}

Another possible direction of investigation could consist in digging deeper in the linear structure of the sets $\bigcup_{n\in A} \mathrm{L}(n)$, where $A\subseteq \omega$ and $\min A\geq 2$. In Theorem \ref{thm: interval not lin} we gave a complete result in the case when $A$ is an interval of the form $\{n,n+1,\dots, n+d\}$. On the other hand, we saw in Section \ref{sec: finer} that when $A$ is not an interval the situation is less clear. For example, it is quite conceivable that the assumption $k> 2\max A$ in Proposition \ref{prop:nottoolargegaps} could be improved. In the same direction, one might try to characterise those finite sets $A$ for which $\ell(A)=\max\{|I|\colon I\subseteq A \text{ is an interval}\}$.

A slightly different question, that we find particularly interesting, is the following (which ought to be compared with Theorem \ref{th: odd c lineable}).
\begin{probl}\label{prob: L(2n) lineable} Is $\bigcup_{1\leq n< \omega}\mathrm{L}(2n)$ lineable?
\end{probl}

Similarly, we could ask if $\bigcup_{2\leq n< \omega}\mathrm{L}(n^2)$ is lineable. Note that we don't even know if these sets are $2$-lineable. In connection with Theorem \ref{th: odd c lineable} it is also natural to ask the following.
\begin{probl}\label{prob: L(2n+1) densely} Is $\bigcup_{1\leq n<\omega}\mathrm{L}(2n+1)$ densely lineable in $\ell_\infty$?
\end{probl}

\subsection*{Acknowledgements} We are most grateful to the anonymous referee for a careful reading of the manuscript, for several suggestions which improved the presentation, and for spotting and correcting a gap in the proof of Proposition \ref{prop: comb lin many values}.

\subsection*{Added in proof.} After the completion of our research, we were informed of some new results that were motivated by our paper. In particular, Menet and Papathanasiou \cite{MenetPapa} recently obtained several interesting results that in particular solve Problem \ref{prob: non separable} and Problem \ref{prob: L(2n+1) densely} and imply that $\bigcup_{2\leq n< \omega}\mathrm{L}(n^2)$ is not lineable. While Problem \ref{prob: L(2n) lineable} seems to be still open, Davide Ravasini recently showed that $\bigcup_{1\leq n< \omega}\mathrm{L}(2n)$ is $2$-lineable. We are most grateful to him for allowing us to explain his argument here.

One uses the same notation and construction as in Example \ref{rmk:strangeoctagon}. Let $\mathcal{P}$ be the vertices of a regular $15$-gon and let $\mathcal{T}\subseteq \mathcal{P}$ be the vertices of an equilateral triangle. Then the points $\mathcal{P} \setminus \mathcal{T}$ are as desired. Indeed, if $\mathscr{L}$ is a set of parallel lines with $\mathcal{P}\subseteq \mathscr{L}$ and such that every line of $\mathscr{L}$ contains a point of $\mathcal{P}$, then $|\mathscr{L}|$ equals $15$ or $8$. In the first case, exactly $12$ lines are needed to cover $\mathcal{P} \setminus \mathcal{T}$. So, we can assume that $|\mathscr{L}|=8$. Now, if one edge of $\mathcal{T}$ is parallel to the lines in $\mathscr{L}$, then exactly $2$ of the lines in $\mathscr{L}$ don't contain points of $\mathcal{P} \setminus \mathcal{T}$ (the line containing a unique point of $\mathcal{P}$ actually contains a point of $\mathcal{T}$). In the other case, all $8$ lines contain points of $\mathcal{P} \setminus \mathcal{T}$. Hence, in order to cover $\mathcal{P} \setminus \mathcal{T}$ one needs $6$, $8$, or $12$ parallel lines, which means that $\mathrm{L}(6)\cup \mathrm{L}(8)\cup \mathrm{L}(12)$ is $2$-lineable.

Incidentally, the same construction works for every $(2n-1)$-gon, provided that $n$ is even and $2n-1$ is a multiple of $3$. Hence, for every $k\geq1$, $\mathrm{L}(6k)\cup \mathrm{L}(6k+2)\cup \mathrm{L}(12k)$ is $2$-lineable.

\bibliographystyle{amsplain}
\bibliography{ideale}
\end{document}